\numberwithin{equation}{section} 
\newtheorem{thm}[equation]{Theorem}
\newtheorem{cor}[equation]{Corollary} 
\newtheorem{lem}[equation]{Lemma} 
\newtheorem{prop}[equation]{Proposition} 
\newtheorem{example}[equation]{Example}
\theoremstyle{definition}
\newtheorem{remark}[equation]{Remark}
\DeclareMathOperator{\cha}{char}
\DeclareMathOperator{\id}{id}
\DeclareMathOperator{\op}{op}
\DeclareMathOperator{\ev}{ev}
\DeclareMathOperator{\cx}{cx}
\DeclareMathOperator{\rk}{rk}
\DeclareMathOperator{\coh}{H}
\DeclareMathOperator{\Ext}{Ext}
\DeclareMathOperator{\Hom}{Hom}
\DeclareMathOperator{\Jac}{Jac}
\newcommand{\ot}{\otimes}
\newcommand{\Z}{\mathbb{Z}}
\newcommand{\C}{\mathbb{C}}
\newcommand{\g}{\mathfrak{g}}
\newcommand{\D}{\mathfrak{D}}
\newcommand{\N}{\mathcal{N}}
\newcommand{\V}{\mathcal{V}}
\newcommand{\Da}{\mathcal{D}}
\newcommand{\Hecke}{\mathcal{H}}
\newcommand{\B}{\mathcal{B}}
\newcommand{\HH}{{\rm HH}}
\newcommand{\DOT}{\setlength{\unitlength}{1pt}\begin{picture}(2.5,2)
                  (1,1)\put(2,3.5){\circle*{2}}\end{picture}}
\newcommand{\YDG}{^G_G{\mathcal{YD}}}
\begin{document}


\title[Support varieties and representation type]
{\bf Support varieties and representation type of self-injective algebras}
 
\author{J\"org Feldvoss}
\address{Department of Mathematics and Statistics, University of South Alabama,
Mobile, AL 36688--0002, USA}
\email{jfeldvoss@jaguar1.usouthal.edu}
\author{Sarah Witherspoon}\thanks{The second author was partially supported by
NSF grant \#DMS-0800832 and Advanced Research Program Grant 010366-0046-2007
from the Texas Higher Education Coordinating Board.}
\address{Department of Mathematics, Texas A\&M University, College Station, TX
77843--3368, USA}
\email{sjw@math.tamu.edu}  
      
\date{May 9, 2011}
          
\maketitle


\begin{abstract}
We use the theory of varieties for modules arising from Hochschild cohomology
to give an alternative version of the wildness criterion of Bergh and Solberg
\cite{BS}: If a finite dimensional self-injective algebra has a module of
complexity at least $3$ and satisfies some finiteness assumptions on Hochschild
cohomology, then the algebra is wild. We show directly how this is related to
the analogous theory for Hopf algebras that we developed in \cite{FW}. We give
applications to many different types of algebras: Hecke algebras, reduced
universal enveloping algebras, small half-quantum groups, and Nichols (quantum
symmetric) algebras.
\end{abstract}

\medskip

\noindent 2010 Mathematics Subject Classification: 16D50, 16L60, 16G60, 16G10, 16E40


\section*{Introduction}


The cohomology of a finite dimensional algebra holds information about its
representation type. Rickard first made this observation about group algebras,
and it inspired him to develop a theory for self-injective algebras (see
\cite{R}). Since then mathematicians have applied homological methods to
determine representation type in various contexts (see \cite{A,AM,EN,G2,GP,Pr})
and they have partially filled a gap in a wildness criterion of Rickard
\cite{R} (see \cite[Corrigendum]{A} and \cite{BS,F2,FW}).

For example, Farnsteiner \cite{F2} proved that if the Krull dimension of the
cohomology ring of a cocommutative Hopf algebra is at least $3$, then the Hopf
algebra has wild representation type; more precisely he gave a block version
of this statement. He used the result of Friedlander and Suslin \cite{FS}
that the cohomology ring of a finite dimensional cocommutative Hopf algebra
is finitely generated, as well as the theory of support varieties developed by
Suslin, Friedlander, Bendel, and Pevtsova \cite{FP,SFB}. In \cite{FW} we gave
a direct generalization of Farnsteiner's results to all Hopf algebras satisfying
finiteness assumptions on their cohomology. Bergh and Solberg \cite{BS} gave
a wildness criterion even more generally for self-injective algebras, under
some finiteness assumptions on the cohomology of given modules. 

In this paper, we begin by showing how a wildness criterion for self-injective
algebras may be obtained using Hochschild cohomology and the theory of support
varieties developed for Hochschild cohomology by Erdmann, Holloway, Snashall,
Solberg, and Taillefer (see \cite{EHSST} and \cite{SS}). Bergh and Solberg have
weaker hypotheses than ours (see for example \cite[Example 4.5]{BS}) due to the
flexibility their relative theory provides, however our finiteness assumptions
are known to hold for many types of algebras. Most of this article is devoted to
applications: We apply the theory to Hecke algebras, reduced universal enveloping
algebras of restricted Lie algebras, small half-quantum groups, and Nichols (quantum
symmetric) algebras. In the last section, we show that for a Hopf algebra, one may
make a choice so that the support varieties for a module defined in terms of Hochschild
cohomology herein are isomorphic to the support varieties defined in \cite{FW} in
terms of the cohomology ring of the Hopf algebra. Thus we give a direct connection
between these two seemingly different support variety theories for Hopf algebras.

Throughout this paper we will assume that all associative rings have unity elements 
and that all modules over associative rings are unital. Moreover, all modules are
left modules and all tensor products are over the ground field unless indicated
otherwise. Finally, the principal block of an augmented $k$-algebra $A$ is the
unique indecomposable ideal direct summand of $A$ whose unity element acts as the
identity on the trivial $A$-module $k$.


\section{Complexity and support varieties}\label{prelim}


Let $A$ be a finite dimensional self-injective algebra over an algebraically closed
field $k$ of arbitrary characteristic unless stated otherwise. In this section we
recall some definitions and results that will be needed later.

Let $A^e:=A\ot A^{\op}$ denote the enveloping algebra of $A$. Then the graded ring
$\Ext_{A^e}^{\DOT}(A,A)$ is isomorphic to the Hochschild cohomology ring $\HH^{\DOT}
(A)$ of $A$. For any $A$-bimodule $M$, $\Ext_{A^e}^{\DOT}(A,M)$ is isomorphic to the
Hochschild cohomology $\HH^{\DOT}(A,M)$.

We make the following assumption in order to obtain an affine variety, namely $\V_H$
defined below, in which to consider the support varieties of finite dimensional $A$-modules.

\vspace{.5cm}

\noindent Assumption ({\bf fg1}):
\vspace{-.3cm}
\begin{equation*}\begin{array}{l}
\mbox{{\em There is a graded subalgebra} } H^{\DOT}\mbox{ {\em of} }\ \HH^{\DOT}(A)
\mbox{ {\em such that} } H^{\DOT}\mbox{ {\em is a finitely}}\\
\mbox{{\em generated commutative algebra and} } H^0=\HH^0(A)\,.\end{array}
\end{equation*}
\vspace{.1cm}

For any $A$-module $M$ there is a homomorphism of graded rings $\varphi_M:\HH^{\DOT}(A)
\to\Ext_A^{\DOT}(M,M)$ given by $\varphi_M(\eta):=\eta\ot_A M$ (see \cite[p.~707]{SS}).
Combined with Yoneda composition, the homomorphisms $\varphi_M$ and $\varphi_N$ 
induce right and left $\HH^{\DOT}(A)$-actions on $\Ext_A^{\DOT}(M,N)$ for any two $A$-modules
$M$ and $N$ (see \cite[p.~707]{SS}), and by restriction they also induce right and left
$H^{\DOT}$-actions on $\Ext_A^{\DOT}(M,N)$.

Denote by $\N_H$ the ideal in $H^{\DOT}$ generated by the homogeneous nilpotent elements
of $H^{\DOT}$ and the Jacobson radical of $H^0$. Since $\N_H$ is a homogeneous ideal of
$H^{\DOT}$ and the latter is commutative (see \cite[Corollary 1, p.\ 281]{G} or \cite[Corollary~1.2]{SS}),
it follows from \cite[Lemma 2.2(b)]{SS} that every maximal ideal of $H^{\DOT}$ contains $\N_H$.
Consequently, there is a bijection between the maximal ideals of $H^{\DOT}$ and the maximal
ideals of $\overline{H^{\DOT}}:=H^{\DOT}/\N_H$. In the following we denote by $\V_H$ the
maximal ideal spectrum of $\overline{H^{\DOT}}$. 

Let now $M$ and $N$ be finite dimensional $A$-modules. Let $I_H(M,N)$ be the subset of
those elements in $\V_H$ whose inverse images in $H^{\DOT}$ contain the annihilator of
the action of $H^{\DOT}$ on $\Ext_A^{\DOT}(M,N)$. (The action may be taken to be either
the left or right action; their annihilators coincide according to \cite[Lemma 2.1]{SS}.)
Then $I_H(M,N)$ is a homogeneous ideal of $\overline{H^{\DOT}}$. Let $\V_H(M,N)$ denote
the maximal ideal spectrum of the finitely generated commutative $k$-algebra
$\overline{H^{\DOT}}/I_H(M,N)$. As the ideal $I_H(M,N)$ is homogeneous, the variety
$\V_H(M,N)$ is conical. If $M=N$, we write $I_H(M):=I_H(M,M)$ and $\V_H(M):=\V_H(M,M)$.
The latter is called the {\em support variety\/} of $M$ (see \cite[Definition 2.3]{SS}).

In order to be able to apply the results of \cite{EHSST} and \cite{SS}, we also need
the following assumption, where $\Jac(A)$ denotes the Jacobson radical of $A$.
\vspace{.5cm}

\noindent Assumption ({\bf fg2}): $\Ext_A^{\DOT}(A/\Jac(A),A/\Jac(A))$ {\em is a
finitely generated\/} $H^{\DOT}${\em -module\/}.
\vspace{.01cm}

\begin{remark}\label{TFAE}
By \cite[Proposition 2.4]{EHSST}, the assumption ({\bf fg2}) is equivalent to either
of the following two statements:
\begin{itemize}
\item[(i)]  For all finite dimensional $A$-modules $M$ and $N$, $\Ext^{\DOT}_A(M,N)$
            is finitely generated over $H^{\DOT}$.
\item[(ii)] For all finite dimensional $A$-bimodules $M$, $\HH^{\DOT}(A,M)$ is finitely
            generated over $H^{\DOT}$.
\end{itemize}
Further, the statement (i) is equivalent to the corresponding statement in which $M=N$,
since we may apply the latter to the direct sum of two finite dimensional modules
to obtain (i).
\end{remark}

In the following we show that every block of a finite dimensional self-injective algebra
for which assumptions ({\bf fg1}) and ({\bf fg2}) hold is self-injective and also satisfies
the same conditions ({\bf fg1}) and ({\bf fg2}).

\begin{lem}\label{block}
Let $A$ be a finite dimensional algebra over an arbitrary field $k$ and let $B$ be a block
of $A$. Then the following statements hold:
\begin{enumerate}
\item If $A$ is self-injective, then $B$ is self-injective.
\item If $A$ satisfies {\rm ({\bf fg1})}, then $B$ satisfies {\rm ({\bf fg1})}.
\item If $A$ satisfies {\rm ({\bf fg2})}, then $B$ satisfies {\rm ({\bf fg2})}.
\end{enumerate}
\end{lem}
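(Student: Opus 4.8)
The key structural fact is that a block $B$ of $A$ is a direct factor as an algebra: $A = B \times C$ for some complementary ideal $C$, so that every $B$-module is an $A$-module (via the projection $A \to B$) and conversely. I will use this repeatedly together with the compatibility of all the relevant cohomology with such direct product decompositions.

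First, for part (1): if $A$ is self-injective, then $A$ is injective as a left $A$-module, hence so is each indecomposable ideal direct summand; but $B$ as a left $B$-module is a direct summand of $B$ as a left $A$-module, which is a summand of $A$, so $B$ is injective over $A$, and since the $B$-action factors through the projection, $B$ is injective as a left $B$-module. (One can also quote the standard fact that a finite product of algebras is self-injective iff each factor is.)

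Next, for parts (2) and (3): the central point is that $A^e = A \ot A^{\op}$ decomposes as $A^e = (B \ot B^{\op}) \times R$ where $R$ is the complementary ideal, and $A$ as an $A^e$-module decomposes correspondingly as $B \oplus C'$ with $C'$ annihilated by $B \ot B^{\op}$; hence $\HH^{\DOT}(A) \cong \HH^{\DOT}(B) \times \HH^{\DOT}(C)$ as graded algebras. So, given the subalgebra $H^{\DOT} \subseteq \HH^{\DOT}(A)$ witnessing (fg1) for $A$, I would take its image $\bar{H}^{\DOT}$ under the projection $\HH^{\DOT}(A) \to \HH^{\DOT}(B)$ — equivalently, multiply by the block idempotent. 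This image is a finitely generated commutative graded algebra (a quotient of $H^{\DOT}$), and in degree $0$ it is $e_B \HH^0(A) = \HH^0(B)$ because the projection $\HH^0(A) \to \HH^0(B)$ is surjective (indeed $\HH^0(A)$ is the center $Z(A)$, which surjects onto $Z(B)$ via the central idempotent). A small point to check is that $e_B$ lies in $H^{\DOT}$, which holds since $e_B \in Z(A) = \HH^0(A) = H^0 \subseteq H^{\DOT}$; this is exactly where the hypothesis $H^0 = \HH^0(A)$ in (fg1) is used, and it is the one place requiring a little care. For (3), I would use Remark \ref{TFAE}: it suffices to check that $\Ext^{\DOT}_B(B/\Jac(B), B/\Jac(B))$ is finitely generated over $\bar{H}^{\DOT}$. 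But $\Jac(B) = e_B \Jac(A)$, so $B/\Jac(B)$ is a direct summand of $A/\Jac(A)$ as an $A$-module, whence $\Ext^{\DOT}_B(B/\Jac(B), B/\Jac(B))$ is a direct summand of $\Ext^{\DOT}_A(A/\Jac(A), A/\Jac(A))$ as a module over $\HH^{\DOT}(A)$, and the $H^{\DOT}$-action on this summand factors through $\bar{H}^{\DOT}$. A submodule (direct summand) of a finitely generated module over a Noetherian ring is finitely generated, and $\bar{H}^{\DOT}$ is Noetherian since it is finitely generated commutative; this gives (fg2) for $B$.

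**Main obstacle.** None of the steps is deep; the only thing demanding genuine attention is the bookkeeping identifying $\Ext$ and Hochschild cohomology groups over $B$ with the corresponding direct summands over $A$ compatibly with the $H^{\DOT}$-module structures — i.e. making precise that "restricting scalars along $A \twoheadrightarrow B$" and "cutting by the block idempotent" are mutually inverse in the appropriate derived sense, and that $\varphi_M$ for a $B$-module $M$ is the composite of $\varphi_M$ over $A$ with the projection $\HH^{\DOT}(A) \to \HH^{\DOT}(B)$. Once that dictionary is set up, (1)–(3) follow formally.
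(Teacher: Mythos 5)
Your proposal is correct and follows essentially the same route as the paper: cut everything by the block idempotent $e$, identify $\HH^{\DOT}(B)$ with $e\HH^{\DOT}(A)$, take $eH^{\DOT}$ as the subalgebra witnessing ({\bf fg1}), and transfer finite generation by applying $e$ to generators (the paper does this directly for arbitrary $\Ext_A^{\DOT}(M,N)$ rather than via Remark~\ref{TFAE} and Noetherianity, but that is only a cosmetic difference). Your care about $e\in H^{\DOT}$ and $e\HH^0(A)=\HH^0(B)$ is exactly where the hypothesis $H^0=\HH^0(A)$ enters, matching the paper's implicit use of it.
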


\begin{proof}
(1): Since direct summands of injective modules are injective, $B$ is injective as an
$A$-module and therefore also as a $B$-module.

(2): Let $e$ be the primitive central idempotent of $A$ corresponding to $B$, and
consider the algebra homomorphism from $A^e$ onto $B^e$ defined by $a\ot a'\mapsto
ea\ot ea'$. Letting $e$ act on $f\in\Hom_k(A^{\ot n},A)$ as $(ef)(x_1\ot\cdots\ot
x_n)=e(f(ex_1\ot\cdots\ot ex_n))$, one sees that $\HH^n(B)$ is isomorphic to $e\HH^n
(A)$ where the latter is a direct summand of $\HH^n(A)$ for every non-negative integer
$n$. In particular, $eH^{\DOT}$ is a finitely generated commutative subalgebra of the
direct summand $\HH^{\DOT}(B)=e\HH^{\DOT}(A)$ of $\HH^{\DOT}(A)$. Hence $B$ satisfies
({\bf fg1}).

(3): For all $A$-modules $M$ and $N$, $\Ext_B^n(eM,eN)$ is isomorphic to $e\Ext_A^n
(M,N)$ where the latter is a direct summand of $\Ext_A^n(M,N)$. If $\xi_1,\dots,\xi_s$
are generators for the $H^{\DOT}$-module $\Ext_A^{\DOT}(M,N)$, then $e\xi_1,\dots,e\xi_s$
are generators for the $eH^{\DOT}$-module $\Ext_B^{\DOT}(eM,eN)$. Hence B satisfies
{\rm ({\bf fg2})}.
\end{proof}

Let $V_{\DOT}$ be a graded vector space over $k$ with finite dimensional homogeneous
components. The {\em rate of growth\/} $\gamma(V_{\DOT})$ of $V_{\DOT}$ is  the smallest
non-negative integer $c$ such that there is a number $b$ for which $\dim_k V_n\leq b
n ^{c-1}$ for all non-negative integers $n$. If no such $c$ exists, $\gamma(V_{\DOT})$
is defined to be $\infty$. The {\em complexity\/} $\cx_A(M)$ of an $A$-module $M$ may
be defined in the standard way: Let $P_{\DOT}: \ \cdots\rightarrow P_1\rightarrow P_0
\rightarrow M\rightarrow 0$ be a minimal projective resolution of $M$. Then $\cx_A(M)
:=\gamma(P_{\DOT})$.

We will need the following lemma stating that for a module belonging to a block of a
finite dimensional self-injective algebra, the complexity is the same whether it is
considered as a module for the block or for the whole algebra.

\begin{lem}\label{cxblock}
Let $A$ be a finite dimensional self-injective algebra over an arbitrary field and
let $B$ be a block of $A$. Then $\cx_B(M)=\cx_A(M)$ for every $B$-module $M$.
\end{lem}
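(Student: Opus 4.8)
The plan is to exploit the fact that a block $B$ of $A$ is a direct summand of $A$ as an algebra, via a primitive central idempotent $e$, so that $B = eA$ and the trivial and projective structures for $B$-modules are inherited from $A$. First I would observe that since $A$ is self-injective, so is $B$ (Lemma~\ref{block}(1)), so minimal projective resolutions exist over both $A$ and $B$ and complexity is well-defined in each. The key point is that a $B$-module is the same thing as an $A$-module annihilated by $1-e$, and that the indecomposable projective $B$-modules are exactly those indecomposable projective $A$-modules lying in $B$; hence if $P_{\DOT}\to M$ is a minimal projective resolution of $M$ over $A$, every $P_n$ already lies in $B$ (because $M$ does and the resolution is minimal, so $eP_n = P_n$), and this same complex is a minimal projective resolution of $M$ over $B$.

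The one subtlety is minimality: a projective cover over $A$ of a $B$-module must again be a $B$-module, since the radical of $A$ restricted to $B$ is the radical of $B$ and $A/\Jac(A)$-module structure decomposes along the block decomposition. So I would spell out that $\Jac(B) = e\Jac(A)$ and that $B/\Jac(B)$ is a direct summand of $A/\Jac(A)$, whence the projective cover of a $B$-module computed in $A$-mod coincides with the one computed in $B$-mod. Iterating, the minimal projective resolution of $M$ as an $A$-module coincides term-by-term with its minimal projective resolution as a $B$-module.

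Once that identification is in place, the conclusion is immediate: the graded vector space $P_{\DOT}$ underlying the resolution is literally the same in both cases, so its rate of growth $\gamma(P_{\DOT})$ is the same, and therefore $\cx_B(M) = \gamma(P_{\DOT}) = \cx_A(M)$. I expect the main (and really only) obstacle to be writing down cleanly the assertion that projective covers and hence minimal projective resolutions are computed identically whether one works over $A$ or over the block $B$; everything after that is a one-line comparison of rates of growth. This is entirely routine, and an alternative phrasing would simply cite the standard fact that for an idempotent $e$ with $eAe = eA = Ae$ (i.e.\ $e$ central), restriction along $A \twoheadrightarrow eA$ identifies $eA$-modules with a full subcategory of $A$-modules closed under the relevant constructions.
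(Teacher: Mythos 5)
Your argument is correct, and it actually proves something slightly stronger than what the paper records. The paper's proof is a quick two-inequality comparison that never mentions minimality: a projective resolution of $M$ over $A$ yields one over $B$ (projective $A$-modules become projective $B$-modules upon cutting by the block idempotent, e.g.\ applying the exact functor $e(-)$, which fixes $M$), giving $\cx_B(M)\le\cx_A(M)$; conversely, since $B$ is a direct summand of $A$ and hence a projective $A$-module, every projective $B$-module is projective over $A$, so a projective $B$-resolution of $M$ is already a projective $A$-resolution, giving $\cx_A(M)\le\cx_B(M)$. You instead identify the minimal projective resolutions term by term, via the observation that $\Jac(B)=e\Jac(A)$ and that the projective cover of a $B$-module computed in $A$-modules lies in $B$ and agrees with its projective cover over $B$; this is a finer statement and makes the equality of growth rates literal, at the cost of having to justify the compatibility of projective covers. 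One small point: you do not need self-injectivity (of $A$ or of $B$) for minimal projective resolutions to exist --- every finite dimensional algebra is semiperfect, so finite dimensional modules always have projective covers; neither your argument nor the paper's actually uses the self-injectivity hypothesis in this lemma.
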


\begin{proof}
Every projective $A$-module is projective as a $B$-module, and consequently $\cx_B(M)
\leq\cx_A(M)$. On the other hand, $B$ is a direct summand of $A$, and as such $B$ is
a projective $A$-module. Hence every projective $B$-module is also projective as an
$A$-module and therefore $\cx_B(M)\geq\cx_A(M)$.
\end{proof}

Finally, we will need the following connection between complexity, support varieties,
and the rate of growth of the self-extensions of a module. The first equality is just
\cite[Theorem 2.5(c)]{EHSST}. Note that the indecomposability of $A$ is not used in
the proof (see the proof of \cite[Proposition 2.1(a)]{EHSST}). The second equality is
implicitly contained in the suggested proof (see also the first part of the proof of
\cite[Proposition 2.3]{FW} for Hopf algebras that transfers to this more general
situation under the assumptions {\rm ({\bf fg1})} and {\rm ({\bf fg2})}).

\begin{thm}\label{cx}
Let $A$ be a finite dimensional self-injective algebra over an algebraically closed
field $k$ of arbitrary characteristic for which assumptions {\rm ({\bf fg1})} and
{\rm ({\bf fg2})} hold. Let $M$ be a finite dimensional $A$-module. Then
$$
\cx_A(M)=\dim\V_H(M)=\gamma(\Ext^{\DOT}_A(M,M))\,.
$$
\end{thm}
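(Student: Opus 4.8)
The plan is to establish the two equalities separately, drawing on the cited results and adapting the Hopf-algebra argument from \cite{FW} to the present setting.

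The first equality $\cx_A(M)=\dim\V_H(M)$ is, as the statement notes, essentially \cite[Theorem 2.5(c)]{EHSST}. The only subtlety is that \cite{EHSST} works under a blanket indecomposability hypothesis on $A$, whereas here $A$ need not be indecomposable. So the first step is to inspect the proof of \cite[Proposition 2.1(a)]{EHSST} and verify that indecomposability is never invoked: the argument compares the Krull dimension of $\overline{H^{\DOT}}/I_H(M)$ with the rate of growth of a minimal projective resolution of $M$, using only the finite generation supplied by \textbf{(fg1)} and \textbf{(fg2)} together with standard commutative algebra (Noether normalization and the behavior of Hilbert series of finitely generated graded modules). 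I would simply remark that the proof goes through verbatim, since all the inputs are block-local in nature.

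For the second equality $\dim\V_H(M)=\gamma(\Ext^{\DOT}_A(M,M))$, I would follow the first part of the proof of \cite[Proposition 2.3]{FW}. The point is that $\Ext^{\DOT}_A(M,M)$ is, by \textbf{(fg2)} and Remark~\ref{TFAE}(i), a finitely generated module over the finitely generated commutative graded $k$-algebra $H^{\DOT}$, and its annihilator (for either the left or right $H^{\DOT}$-action, which agree by \cite[Lemma 2.1]{SS}) cuts out precisely the variety $\V_H(M)$ after passing to $\overline{H^{\DOT}}$. For a finitely generated graded module over a finitely generated commutative graded algebra, the rate of growth of the module equals the Krull dimension of the algebra modulo the annihilator of the module; this is a Hilbert-series computation via graded Noether normalization. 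Hence $\gamma(\Ext^{\DOT}_A(M,M))=\dim\bigl(\overline{H^{\DOT}}/I_H(M)\bigr)=\dim\V_H(M)$. One must check that factoring out the nilpotents $\N_H$ does not change the rate of growth — it does not, since $H^{\DOT}$ and $\overline{H^{\DOT}}$ have the same Krull dimension and the relevant annihilators correspond under the quotient map.

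The main obstacle is bookkeeping rather than conceptual: one must be careful that the various finiteness hypotheses ensure $\Ext^{\DOT}_A(M,M)$ is genuinely finitely generated as a module over $H^{\DOT}$ (not merely over the possibly larger $\HH^{\DOT}(A)$), and that the grading conventions make the ``rate of growth equals dimension'' principle applicable. Once the finite generation over $H^{\DOT}$ is in hand — which is exactly Remark~\ref{TFAE}(i) — both equalities reduce to the standard dictionary between rates of growth of finitely generated graded modules and dimensions of the associated support loci, and I would invoke the arguments of \cite[Proposition 2.3]{FW} and \cite[Proposition 2.1]{EHSST} with the observation that indecomposability of $A$ plays no role.
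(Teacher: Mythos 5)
Your proposal follows essentially the same route as the paper: the first equality is taken from \cite[Theorem 2.5(c)]{EHSST} with the observation that the indecomposability of $A$ is never used in the proof of \cite[Proposition 2.1(a)]{EHSST}, and the second equality is obtained by transferring the first part of the proof of \cite[Proposition 2.3]{FW} to this setting under ({\bf fg1}) and ({\bf fg2}), via the standard dictionary between the rate of growth of a finitely generated graded module and the Krull dimension of the algebra modulo its annihilator. Your extra bookkeeping (finite generation of $\Ext^{\DOT}_A(M,M)$ over $H^{\DOT}$ via Remark~\ref{TFAE}, and the fact that passing to $\overline{H^{\DOT}}=H^{\DOT}/\N_H$ changes neither the Krull dimension nor the relevant support) is exactly what the cited arguments require, so the proposal is correct.
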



\section{Complexity and representation type}


Theorem \ref{mainthm} below generalizes \cite[Theorem 3.1]{F2} and \cite[Theorem 3.1]{FW}
(cf.\ \cite[Theorem 4.1]{BS}). Thus we recover a result of Rickard \cite[Theorem 2]{R},
whose proof contains a gap (see \cite[Corrigendum]{A} and \cite{F2}), under the assumptions
({\bf fg1}) and ({\bf fg2}). We apply the general theory developed in \cite{EHSST} and
\cite{SS} (the needed parts of which are summarized in Section \ref{prelim}) to
adapt Farnsteiner's proof to this more general setting. Theorem \ref{mainthm} is
similar to \cite[Theorem 4.1]{BS} with slightly different hypotheses; for completeness
we give a proof in our context.

Recall that there are three classes of finite dimensional associative algebras over
an algebraically closed field (see \cite[Corollary C]{CB} or \cite[Theorem 4.4.2]{B1}):
An algebra $A$ is {\em representation-finite\/} if there are only finitely many
isomorphism classes of finite dimensional indecomposable $A$-modules. It is {\em tame\/}
if it is not representation-finite and if the isomorphism classes of indecomposable
$A$-modules in any fixed dimension are almost all contained in a finite number of
$1$-parameter families. The algebra $A$ is {\em wild\/} if the category of finite
dimensional $A$-modules contains the category of finite dimensional modules over the
free associative algebra in two indeterminates. (For more precise definitions we refer
the reader to \cite[Definition~4.4.1]{B1}.) The classification of indecomposable objects
(up to isomorphism) of the latter category is a well-known unsolvable problem and so
one is only able to classify the finite dimensional indecomposable modules of
representation-finite or tame algebras.

\begin{thm}\label{mainthm}
Let $A$ be a finite dimensional self-injective algebra over an algebraically closed
field $k$ of arbitrary characteristic for which assumptions {\rm ({\bf fg1})} and
{\rm ({\bf fg2})} hold and let $B$ be a block of $A$. If there is a $B$-module $M$
such that $\cx_A(M)\ge 3$, then $B$ is wild.
\end{thm}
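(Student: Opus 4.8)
The plan is to adapt Farnsteiner's argument (as in \cite{F2} and \cite[Theorem 3.1]{FW}) to the Hochschild-cohomology setting. By Lemma \ref{block} the block $B$ is itself self-injective and satisfies ({\bf fg1}) and ({\bf fg2}), and by Lemma \ref{cxblock} we have $\cx_B(M) = \cx_A(M) \ge 3$; so after replacing $A$ by $B$ we may assume $A$ is indecomposable and that there is an $A$-module $M$ with $\cx_A(M) \ge 3$. By Theorem \ref{cx} this means $\dim \V_H(M) \ge 3$.

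The strategy is a dichotomy: either $A$ is wild (and we are done), or $A$ is tame (or representation-finite), and in the latter case we derive a contradiction from the existence of a module of complexity $\ge 3$. First I would reduce to a module whose support variety has dimension exactly $3$: since $\V_H(M)$ is a conical variety (homogeneous ideal) of dimension $\ge 3$, one can use the theory of \cite{EHSST,SS} — in particular the realization of closed homogeneous subvarieties of $\V_H(M)$ as support varieties of modules, obtained by taking kernels of cohomology operators $\zeta \in H^n$ (the ``$L_\zeta$-module'' construction and its properties, e.g.\ $\V_H(L_\zeta) = \V_H(\zeta)$ and the tensor-product-like behaviour of support varieties under these operations) — to produce a finitely generated $A$-module $N$ with $\cx_A(N) = \dim \V_H(N) = 3$, or at least with a $3$-dimensional component. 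Then I would pass to a suitable ``periodic-direction'' quotient: intersecting with the hypersurface defined by a generic element of $H^{\DOT}$ repeatedly, one reduces the dimension by one at each step, and one arranges a module $W$ with $\cx_A(W) = 3$ such that $W$ fits into a short exact sequence exhibiting an $\Omega$-periodicity or near-periodicity relating it to a module of complexity $2$.

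The heart of the argument is then the interaction with Rickard's / Crawley-Boevey's structure theory of tame algebras: for a tame self-injective algebra, the Auslander–Reiten quiver is severely constrained, and in particular one shows that a tame algebra cannot support a family of modules of complexity $3$. Concretely, I would argue that if $A$ were tame, then (using that for each $d$ there are only finitely many one-parameter families in dimension $d$, together with the bounds on dimensions of the syzygies $\Omega^n N$ coming from $\cx_A(N) = 3$, i.e.\ $\dim_k \Omega^n N$ grows like $n^2$) one obtains, via the $L_\zeta$-construction applied to a two-dimensional subvariety, an infinite family of pairwise non-isomorphic indecomposable modules of the same dimension not covered by finitely many one-parameter families — contradicting tameness. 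This is essentially the content of \cite[Theorem 2]{R} once the finite-generation gap is filled, and the point here is that ({\bf fg1}) and ({\bf fg2}) supply exactly the finite generation needed to run the support-variety machinery (existence of $L_\zeta$-modules, the equality $\cx = \dim \V_H$, the tensor/intersection formulas) that Farnsteiner used via \cite{FS,SFB} in the cocommutative Hopf case and that \cite{FW} used for general Hopf algebras.

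The main obstacle I anticipate is the passage from ``$\cx_A(M) \ge 3$'' to the explicit infinite family contradicting tameness: one must carefully carve down $\V_H(M)$ using generic hyperplane sections while keeping track of which $A$-module realizes the resulting subvariety and controlling the dimensions of its syzygies, so that the family produced genuinely lives in a single dimension (or a controlled range of dimensions) and genuinely consists of infinitely many non-isomorphic indecomposables. Handling the case distinction between $\V_H(M)$ being irreducible of dimension $3$ versus having several components, and ensuring the chosen $\zeta$'s are generic enough that the $L_\zeta$-modules behave as expected, is the delicate bookkeeping; the rest (self-injectivity, finite generation, the complexity$=\dim\V_H$ identity) is provided directly by the results quoted above.
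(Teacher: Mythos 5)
Your overall line of attack (reduce to the block via Lemmas \ref{block} and \ref{cxblock}, convert $\cx_A(M)\ge 3$ into $\dim\V_H(M)\ge 3$ by Theorem \ref{cx}, cut the variety down with homogeneous cohomology elements, produce an infinite family of indecomposables of bounded dimension, and then contradict the structure theory of tame algebras) is the same as the paper's, but the two steps that actually make it work are missing. For the construction of the family, the paper does not reduce to a module of complexity exactly $3$, nor does it need any ``near-periodicity'' statement; rather, since $\V_H(M)$ is conical of dimension $n\ge 3$, Farnsteiner's Noether normalization lemma \cite[Lemma 1.1]{F2} produces a \emph{one-parameter family} of homogeneous elements $\eta_s\in\overline{H^d}$, $s\in k$, with $\dim Z(\langle\eta_s\rangle)\cap\V_H(M)=n-1$ for all $s$ and $\dim Z(\langle\eta_s\rangle)\cap Z(\langle\eta_t\rangle)\cap\V_H(M)=n-2$ for $s\neq t$; then \cite[Proposition 4.3]{EHSST} supplies modules $N_{\eta_s}=M_{\eta_s}\ot_A M$ realizing these hypersurface sections \emph{together with a uniform bound} $\dim_k N_{\eta_s}\le b_A$ independent of $s$. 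Picking an indecomposable summand $X_s$ with $\dim\V_H(X_s)=n-1$, the condition on pairwise intersections forces the varieties $\V_H(X_s)$ to be distinct, hence the $X_s$ pairwise non-isomorphic, and the uniform bound puts infinitely many of them in one fixed dimension, each of complexity $n-1\ge 2$ by Theorem \ref{cx}. This simultaneous control of distinctness and of dimension is exactly the ``delicate bookkeeping'' you defer; without it no family materializes, and your intermediate reductions (a module of complexity exactly $3$, an $\Omega$-periodicity relating it to complexity $2$) are not steps the argument needs.

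More seriously, your final contradiction is not valid as stated: an infinite set of pairwise non-isomorphic indecomposables in a single dimension does \emph{not} contradict tameness -- one-parameter families provide precisely such sets -- and you give no reason why your family should avoid finitely many one-parameter families. That unsupported assertion is essentially where Rickard's original gap lies; it is not a ``finite-generation gap'' (({\bf fg1}) and ({\bf fg2}) are hypotheses enabling the support-variety machinery, not the repair of \cite[Theorem 2]{R}). The repair is Crawley-Boevey's Theorem D of \cite{CB}: if $A$ is tame or representation-finite, then in each dimension all but finitely many indecomposables are isomorphic to their Auslander--Reiten translates; since $A$ is self-injective, $\tau_A=\N_A\circ\Omega_A^2$, so any such module is periodic and has complexity $1$. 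The modules $X_s$ have complexity $n-1\ge 2$, which yields the contradiction. Your appeal to the growth rate of $\dim_k\Omega^nN$ for a complexity-$3$ module plays no role here; what matters is only that infinitely many members of the constructed family, all in one dimension, fail to be periodic.
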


\begin{proof}
Let $M$ be a $B$-module for which $\cx_A(M)\geq 3$. It follows from Lemma ~\ref{cxblock}
that $\cx_B(M)=\cx_A(M)\geq 3$. According to Lemma \ref{block}, $B$ is also a finite
dimensional self-injective algebra over $k$ for which assumptions {\rm ({\bf fg1})}
and {\rm ({\bf fg2})} hold. As a consequence, it is enough to prove the assertion
for $A$ and one can assume that $A=B$.

It follows from Theorem \ref{cx} and our hypotheses that $n:=\dim\V_H(M)=\cx_A(M)\ge 3$.
For each $\zeta\in\overline{H^{\DOT}}$, denote by $\langle\zeta\rangle$ the ideal of
$\overline{H^{\DOT}}$ generated by $\zeta$. For each ideal $I$ of $\overline{H^{\DOT}}$,
denote by $Z(I)$ its zero set, that is the set of all maximal ideals of $\overline{H^{\DOT}}$
containing $I$. Since $\V_H(M)$ is a conical subvariety of $\V_H$, an application of the
Noether Normalization Lemma (see \cite[Lemma 1.1]{F2}) yields non-zero elements $\eta_s$
($s\in k$) in $\overline{H^d}$ for some $d>0$ such that
\begin{itemize}
\item[(i)]  $\dim Z(\langle\eta_s\rangle)\cap\V_H(M)=n-1$ for all $s\in k$,
\item[(ii)] $\dim Z(\langle\eta_s\rangle)\cap Z(\langle\eta_t\rangle)\cap\V_H(M)
            =n-2$ for $s\neq t\in k$.
\end{itemize}
According to \cite[Proposition 4.3 and Definition 3.1]{EHSST}, there are $A$-bimodules
$M_{\eta_s}$ for which the corresponding $A$-modules $N_{\eta_s}:=M_{\eta_s}\ot_AM$
satisfy
$$
\V_H(N_{\eta_s})=Z(\langle\eta_s\rangle)\cap\V_H(M)
    \ \ \mbox{ and } \ \
\dim_kN_{\eta_s}\leq b_A\,,
$$
where $b_A:=(\dim_kM)(\dim_kA)(\dim_k\Omega_{A^e}^{d-1}(A))$, for all $s\in k$.
Decompose $N_{\eta_s}$ into a direct sum of indecomposable $A$-modules. By
\cite[Proposition 3.4(f)]{SS} and (i) above, for each $s$ there is an indecomposable
direct summand $X_s$ of $N_{\eta_s}$ such that
$$
\V_H(X_s)\subseteq\V_H(N_{\eta_s})=Z(\langle\eta_s\rangle)\cap\V_H(M)
$$
and $\dim\V_H(X_s)=n-1$. We claim that $\V_H(X_s)\neq\V_H(X_t)$ when $s\neq t$: Note
that if $\V_H(X_s)=\V_H(X_t)$, then
$$
\V_H(X_s)\subseteq Z(\langle\eta_s\rangle)\cap Z(\langle\eta_t\rangle)\cap\V_H(M)\,,
$$
and the latter variety has dimension $n-2$ if $s\neq t$. It follows that the varieties
$\V_H(X_s)$ are distinct for different values of $s$, implying that the indecomposable
$A$-modules $X_s$ are pairwise non-isomorphic. The dimensions of the modules $X_s$ are
all bounded by $b_A$, since $X_s$ is a direct summand of $N_{\eta_s}$. Consequently,
there are infinitely many non-isomorphic indecomposable $A$-modules $X_s$ of some fixed
dimension. According to Theorem \ref{cx}, $\cx_A(X_s)=\dim\V_H(X_s)=n-1\geq 2$.

If $A$ is not wild, then by \cite[Corollary C]{CB}, $A$ is tame or representation-finite.
It follows from \cite[Theorem D]{CB} that only finitely many indecomposable $A$-modules
of any dimension (up to isomorphism) are not isomorphic to their Auslander-Reiten
translates. Since $A$ is self-injective, the Auslander-Reiten translation $\tau_A$ is
the same as $\N_A\circ\Omega_A^2=\Omega_A^2\circ\N_A$ where $\N_A$ denotes the
Nakayama functor of $A$ (see \cite[Proposition IV.3.7(a)]{ARS}). The Nakayama functor
preserves projective modules and so any $A$-module isomorphic to its Auslander-Reiten
translate is periodic and thus has complexity $1$. Hence in any dimension there are only
finitely many isomorphism classes of indecomposable $A$-modules with complexity not
equal to~$1$. This is a contradiction, since we have shown that for some dimension, there
are infinitely many non-isomorphic indecomposable $A$-modules of complexity greater
than $1$. Therefore $A$ is wild.
\end{proof}

An application of Theorem \ref{mainthm} and the Trichotomy Theorem \cite[Corollary C]{CB} is:

\begin{cor}\label{maincor}
Let $A$ be a finite dimensional self-injective algebra over an algebraically closed field
$k$ of arbitrary characteristic for which assumptions {\rm ({\bf fg1})} and {\rm ({\bf fg2})}
hold and let $B$ be a block of $A$. If $B$ is tame, then $\cx_A(M)\le 2$ for every finite
dimensional $B$-module $M$.
\end{cor}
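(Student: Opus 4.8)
The plan is to deduce Corollary~\ref{maincor} directly from Theorem~\ref{mainthm} by contraposition, using the Trichotomy Theorem \cite[Corollary C]{CB} to handle the three-way classification of representation type. Since $B$ is assumed tame, it is in particular not representation-finite and, crucially, not wild.

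First I would observe that Theorem~\ref{mainthm} applies: $A$ is a finite dimensional self-injective algebra over an algebraically closed field satisfying ({\bf fg1}) and ({\bf fg2}), and $B$ is a block of $A$. The contrapositive of Theorem~\ref{mainthm} states that if $B$ is \emph{not} wild, then there is no $B$-module $M$ with $\cx_A(M)\ge 3$; equivalently, $\cx_A(M)\le 2$ for every finite dimensional $B$-module $M$. So the second step is simply to note that a tame algebra is not wild: the three classes---representation-finite, tame, and wild---are mutually exclusive by the Trichotomy Theorem \cite[Corollary C]{CB} (a tame algebra has, by definition, indecomposables of each dimension arranged in finitely many one-parameter families, whereas a wild algebra's module category contains that of the free algebra on two generators, and these behaviors cannot coexist). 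Combining these two observations: $B$ tame $\Rightarrow$ $B$ not wild $\Rightarrow$ $\cx_A(M)\le 2$ for all finite dimensional $B$-modules $M$, which is exactly the assertion.

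There is essentially no obstacle here; the corollary is a formal consequence of the theorem together with the mutual exclusivity of the representation types. The only minor point worth being careful about is that the complexity in the statement is computed over $A$ rather than over $B$, but this is harmless---either one invokes Lemma~\ref{cxblock} to identify $\cx_A(M)$ with $\cx_B(M)$, or one simply notes that the hypothesis and conclusion of Theorem~\ref{mainthm} are already phrased in terms of $\cx_A$, so no translation is needed. I would write the proof in two or three sentences: assume for contradiction that some finite dimensional $B$-module $M$ has $\cx_A(M)\ge 3$; then Theorem~\ref{mainthm} forces $B$ to be wild, contradicting tameness by the Trichotomy Theorem; hence $\cx_A(M)\le 2$ for every finite dimensional $B$-module $M$.
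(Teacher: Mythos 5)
Your proposal is correct and matches the paper's intended argument exactly: the paper derives Corollary~\ref{maincor} precisely as an application of Theorem~\ref{mainthm} together with the Trichotomy Theorem \cite[Corollary C]{CB}, i.e.\ tame implies not wild, so the contrapositive of Theorem~\ref{mainthm} gives $\cx_A(M)\le 2$ for all finite dimensional $B$-modules $M$. Your remark about $\cx_A$ versus $\cx_B$ is also handled correctly, since the theorem is already phrased in terms of $\cx_A$.
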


\begin{remark}
Note that the tameness of an algebra $A$ implies the tameness of every block of $A$. In
particular, Corollary \ref{maincor} holds also for the whole algebra.
\end{remark}


\section{Hecke algebras}


Let $\Hecke_q$ be the Hecke algebra associated to a finite Coxeter group of classical
type over a field $k$, with non-zero parameter $q$ in $k$. It is well-known that
$\Hecke_q$ is self-injective (see \cite[Theorem 2.3]{DJ1} for type A and \cite[Remark
after Theorem 2.8]{DJ1} for types B and D). Let $H^{\DOT}:=\HH^{\ev}(\Hecke_q):=\bigoplus_{n
\geq 0}\HH^{2n}(\Hecke_q)$, the subalgebra of $\HH^{\DOT}(\Hecke_q)$ generated by elements
of even degree. It follows from a recent result of Linckelmann \cite[Theorem 1.1]{Li2},
Remark \ref{TFAE}, and \cite[Corollary 1, p.\ 281]{G} that assumptions {\rm ({\bf fg1})}
and {\rm ({\bf fg2})} hold for $\Hecke_q$ as long as the characteristic of the ground
field is zero and the order of $q$ is odd if $\Hecke_q$ is of type B or D, as stated
in the next theorem. To obtain this statement for the even part of the Hochschild
cohomology from Linckelmann's result, first note that we may take as generators of the
full Hochschild cohomology ring a finite set of homogeneous elements. If we take those
generators of even degree combined with all products of pairs of generators in odd degree,
we obtain a finite set of generators of the even subring, since squares of odd degree
elements are zero. A similar argument applies to modules over the Hochschild cohomology
ring.

\begin{thm}\label{cohheckealg}
Assume that the characteristic of $k$ is zero. Let $q$ be a primitive $\ell$th root of
unity in $k$, for some integer $\ell>1$, and assume that $\ell$ is an odd integer if
$\Hecke_q$ is of type {\rm B} or {\rm D}. Then the even Hochschild cohomology ring
$\HH^{\ev}(\Hecke_q)$ is a finitely generated commutative algebra. Moreover,
$\Ext_{\Hecke_q}^{\DOT}(M,N)$ is finitely generated as an $\HH^{\ev}(\Hecke_q)$-module
for all finite dimensional $\Hecke_q$-modules $M$ and $N$.
\end{thm}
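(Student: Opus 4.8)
The plan is to reduce the statement entirely to the cited result of Linckelmann together with the bookkeeping already sketched in the paragraph preceding the theorem, and to Remark~\ref{TFAE}. First I would recall that Linckelmann \cite[Theorem 1.1]{Li2} asserts, for a Hecke algebra $\Hecke_q$ of classical type under exactly the stated hypotheses on $\cha k$ and on the order $\ell$ of $q$, that the full Hochschild cohomology ring $\HH^{\DOT}(\Hecke_q)$ is a finitely generated $k$-algebra and that $\Ext^{\DOT}_{\Hecke_q}(k,k)$ (equivalently, by \cite[Corollary~1, p.~281]{G} and commutativity, $\Ext^{\DOT}_{\Hecke_q}(\Hecke_q/\Jac(\Hecke_q),\Hecke_q/\Jac(\Hecke_q))$) is finitely generated as a module over it. So the raw finiteness input is available; the work is to pass from the whole Hochschild cohomology ring to its even part $H^{\DOT}=\HH^{\ev}(\Hecke_q)$.

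Next I would carry out the passage to the even subring. Choose a finite set $\{f_1,\dots,f_r\}$ of homogeneous algebra generators of $\HH^{\DOT}(\Hecke_q)$, say with $f_1,\dots,f_p$ of even degree and $f_{p+1},\dots,f_r$ of odd degree. Since $\HH^{\DOT}(\Hecke_q)$ is graded-commutative, any monomial in the $f_i$ involving two or more odd-degree factors that are equal is zero, so every even-degree element of $\HH^{\DOT}(\Hecke_q)$ is a $k$-linear combination of monomials in which each odd generator appears at most once; collecting the even generators and the pairwise products $f_if_j$ with $i,j\in\{p+1,\dots,r\}$ then gives a finite generating set for $\HH^{\ev}(\Hecke_q)$ as a $k$-algebra. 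Commutativity of $\HH^{\ev}(\Hecke_q)$ (indeed of all of $\HH^{\DOT}$, by \cite[Corollary~1, p.~281]{G}) is immediate, and $\HH^0(\Hecke_q)\subseteq\HH^{\ev}(\Hecke_q)$ with equality in degree $0$, so assumption ({\bf fg1}) holds for $\Hecke_q$ with this choice of $H^{\DOT}$. For the module statement, the same trick applies: if $\xi_1,\dots,\xi_s$ generate $\Ext^{\DOT}_{\Hecke_q}(k,k)$ over $\HH^{\DOT}(\Hecke_q)$, then the elements $f_{j}\xi_m$ (for $f_j$ odd) together with the $\xi_m$ generate $\Ext^{\DOT}_{\Hecke_q}(k,k)$ over $\HH^{\ev}(\Hecke_q)$, since any element of $\HH^{\DOT}(\Hecke_q)$ is a sum of an even element and an odd element, and an odd element is an $\HH^{\ev}$-combination of the odd $f_j$. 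This establishes that $\Ext^{\DOT}_{\Hecke_q}(\Hecke_q/\Jac(\Hecke_q),\Hecke_q/\Jac(\Hecke_q))$ is finitely generated over $H^{\DOT}=\HH^{\ev}(\Hecke_q)$, i.e.\ assumption ({\bf fg2}). Finally, Remark~\ref{TFAE} (the equivalence of ({\bf fg2}) with statement~(i)) upgrades this to finite generation of $\Ext^{\DOT}_{\Hecke_q}(M,N)$ over $\HH^{\ev}(\Hecke_q)$ for all finite dimensional $M,N$, which is the second assertion of the theorem.

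The only genuine point requiring care — and what I expect to be the main obstacle — is checking that Linckelmann's theorem really applies to all the classical types B and D with the stated parity condition on $\ell$, and with the correct hypothesis ($\cha k = 0$), rather than only to type A or only to certain parameters; this is a matter of correctly quoting and interpreting \cite[Theorem 1.1]{Li2} and of confirming that self-injectivity of $\Hecke_q$ (needed so that the ambient framework of Section~\ref{prelim} applies) is available, which it is by \cite[Theorem 2.3]{DJ1} and the remark following \cite[Theorem 2.8]{DJ1}. The even-subring bookkeeping is routine graded-commutative algebra and the module argument is a one-line reduction; no further input is needed.
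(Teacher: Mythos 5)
Your proposal is correct and follows essentially the same route as the paper, which likewise deduces ({\bf fg1}) and ({\bf fg2}) from Linckelmann's \cite[Theorem 1.1]{Li2} by choosing a finite set of homogeneous generators of $\HH^{\DOT}(\Hecke_q)$, taking the even-degree generators together with products of pairs of odd-degree generators (squares of odd elements vanish by graded-commutativity), applying the same trick to a finite set of module generators, and then invoking Remark~\ref{TFAE} and \cite[Corollary~1, p.~281]{G}. The only blemish is your parenthetical crediting the passage from $\Ext^{\DOT}_{\Hecke_q}(k,k)$ to $\Ext^{\DOT}_{\Hecke_q}(\Hecke_q/\Jac(\Hecke_q),\Hecke_q/\Jac(\Hecke_q))$ to \cite[Corollary~1, p.~281]{G}, which only yields graded-commutativity; this is immaterial, since the finiteness statement quoted from \cite[Theorem 1.1]{Li2} combined with Remark~\ref{TFAE} already covers all finite dimensional modules.
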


A crucial tool in finding the representation types of blocks of Hecke algebras by
applying Theorem \ref{mainthm} is the following lemma.

\begin{lem}\label{tens}
If $A$ and $A^\prime$ are finite dimensional self-injective augmented algebras over
an arbitrary field $k$ for which assumptions {\rm ({\bf fg1})} and {\rm ({\bf fg2})}
hold, then $\cx_{A\ot A^\prime}(k)=\cx_A(k)+\cx_{A^\prime}(k)$.
\end{lem}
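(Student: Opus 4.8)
The plan is to compute the complexity $\cx_{A\ot A'}(k)$ via the rate of growth of a minimal projective resolution of the trivial module $k\cong k\ot k$ over $A\ot A'$, using the fact that the tensor product of minimal projective resolutions over $A$ and $A'$ is a minimal projective resolution over $A\ot A'$. Concretely, if $P_{\DOT}\to k$ is a minimal projective resolution of $k$ over $A$ and $Q_{\DOT}\to k$ one over $A'$, then the total complex of $P_{\DOT}\ot Q_{\DOT}$ is a projective resolution of $k$ over $A\ot A'$; minimality is preserved because $\Jac(A\ot A')=\Jac(A)\ot A'+A\ot\Jac(A')$ (here we use that $k$ is a field, or more carefully that both algebras are finite dimensional over $k$ with $k$ algebraically closed in the applications, though for this lemma an arbitrary field suffices since $A/\Jac(A)\ot A'/\Jac(A')$ is semisimple — this is the one point that needs a short justification). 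Then the $n$th term of the total complex is $\bigoplus_{i+j=n}P_i\ot Q_j$, and its $k$-dimension governs the rate of growth.

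First I would record the elementary fact that for graded vector spaces $U_{\DOT}$ and $V_{\DOT}$ with finite-dimensional components, the rate of growth of the ``convolution'' $W_n:=\bigoplus_{i+j=n}U_i\ot V_j$ satisfies $\gamma(W_{\DOT})=\gamma(U_{\DOT})+\gamma(V_{\DOT})$: the upper bound $\dim_k W_n\le (n+1)(\max_{i\le n}\dim_k U_i)(\max_{j\le n}\dim_k V_j)$ gives $\gamma(W_{\DOT})\le\gamma(U_{\DOT})+\gamma(V_{\DOT})$ after absorbing the linear factor, and the lower bound comes from restricting the sum to the single term near $i=j=n/2$, or more robustly from the observation that polynomial growth rates add under convolution (this is where one invokes that $A$ and $A'$ are self-injective and satisfy {\rm ({\bf fg1})} and {\rm ({\bf fg2})}, so by Theorem~\ref{cx} the Ext algebras $\Ext_A^{\DOT}(k,k)$ and $\Ext_{A'}^{\DOT}(k,k)$ are finitely generated over Noetherian rings and hence have genuine polynomial, not erratic, growth — this legitimizes the lower bound). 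Combining, $\cx_{A\ot A'}(k)=\gamma\big((P\ot Q)_{\DOT}\big)=\gamma(P_{\DOT})+\gamma(Q_{\DOT})=\cx_A(k)+\cx_{A'}(k)$.

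The main obstacle I anticipate is twofold. The smaller issue is verifying that the tensor product of minimal resolutions is again minimal, i.e.\ that the differentials of the total complex have image inside $\Jac(A\ot A')$ times the relevant module; this reduces to the radical computation above and to the fact that $k$ is a splitting field is not actually needed here because $\Jac(A)\ot A' + A\ot\Jac(A')$ has semisimple quotient $(A/\Jac A)\ot(A'/\Jac A')$ over any field. The more delicate issue is the lower bound $\gamma(W_{\DOT})\ge\gamma(U_{\DOT})+\gamma(V_{\DOT})$: a priori a convolution of two sequences each of polynomial growth could fail to have the expected growth if the sequences oscillate (e.g.\ supported on even versus odd degrees). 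This is precisely why the hypotheses {\rm ({\bf fg1})} and {\rm ({\bf fg2})} are invoked — under them, by Theorem~\ref{cx} and the finite generation of $\Ext_A^{\DOT}(k,k)$ as a module over the Noetherian ring $H^{\DOT}$, the Hilbert--Serre theory guarantees that $\dim_k\Ext_A^n(k,k)$ (equivalently $\dim_k P_n$) grows like a genuine polynomial in $n$ of degree $\cx_A(k)-1$ along an arithmetic progression, from which the clean additivity of growth rates under convolution follows. Alternatively, and perhaps more cleanly for write-up, one can phrase everything in terms of $\Ext^{\DOT}_{A\ot A'}(k,k)\cong\Ext^{\DOT}_A(k,k)\ot\Ext^{\DOT}_{A'}(k,k)$ (Künneth) and the equality $\cx=\gamma(\Ext^{\DOT}(k,k))$ from Theorem~\ref{cx}, reducing the whole lemma to the growth-rate-of-tensor-product-of-graded-algebras statement, where finite generation over Noetherian subrings makes the additivity transparent via Krull dimension of the tensor product $H^{\DOT}_A\ot H^{\DOT}_{A'}$.
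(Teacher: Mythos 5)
Your proposal is essentially correct, and your closing ``alternative'' formulation --- the K\"unneth isomorphism $\Ext^{\DOT}_{A\ot A'}(k,k)\cong\Ext^{\DOT}_A(k,k)\ot\Ext^{\DOT}_{A'}(k,k)$ combined with $\cx=\gamma(\Ext^{\DOT}(k,k))$ from Theorem~\ref{cx} --- is precisely the paper's proof; the paper supplements it only with the observation that the Hochschild cohomology of a tensor product is the super tensor product of the Hochschild cohomologies of the factors, so that ({\bf fg1}) and ({\bf fg2}) pass to $A\ot A'$ and Theorem~\ref{cx} can be applied to $A\ot A'$ itself. Your primary route through the minimal resolution $P_{\DOT}\ot Q_{\DOT}$ is a more hands-on variant: it has the advantage of not needing the finiteness conditions for $A\ot A'$ (the complexity of $k$ is read off the tensor resolution directly), at the cost of proving additivity of the rate of growth under convolution by hand --- and you correctly identify the lower bound as the delicate point and the Hilbert--Serre regularity furnished by ({\bf fg1}) and ({\bf fg2}) as what makes it work.

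Two of your justifications need repair, though neither is fatal. First, $(A/\Jac(A))\ot(A'/\Jac(A'))$ need \emph{not} be semisimple over an arbitrary field (think of a purely inseparable field extension tensored with itself), so the equality $\Jac(A\ot A')=\Jac(A)\ot A'+A\ot\Jac(A')$ can fail; but minimality of the total complex only requires the containment $\Jac(A)\ot A'+A\ot\Jac(A')\subseteq\Jac(A\ot A')$, which always holds since the left-hand side is a nilpotent ideal, so the step survives with the weaker (and correct) justification. Second, restricting the convolution to a single term near $i=j=n/2$ gives only $\gamma(W_{\DOT})\ge\gamma(U_{\DOT})+\gamma(V_{\DOT})-1$, which is off by one; you must sum over the roughly $n$ admissible pairs $(i,j)$ lying in the arithmetic progressions where the quasi-polynomials attain their top degree (equivalently, multiply Poincar\'e series and compare pole orders at $t=1$), as your ``more robust'' alternative indicates. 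Note also that the Hilbert--Serre regularity applies to $\dim_k\Ext^n_A(k,k)$ rather than to $\dim_k P_n$ directly; since $\dim_k\Ext^n_A(k,k)\le\dim_k P_n$ for a minimal resolution and K\"unneth identifies the convolution of the Ext-dimensions with $\dim_k\Ext^n_{A\ot A'}(k,k)$, it is cleanest to run the lower bound entirely on Ext and finish with $\gamma(\Ext^{\DOT}_{A\ot A'}(k,k))\le\cx_{A\ot A'}(k)$. Finally, like the paper you quote Theorem~\ref{cx}, which is stated over an algebraically closed field, while the lemma claims an arbitrary field; this is an issue you share with the source rather than introduce.
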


\begin{proof}
By the K\"unneth Theorem, the tensor product of projective resolutions of $k$ as
an $A$-module and as an $A'$-module is a projective resolution of $k$ as an $A
\ot A'$-module, and moreover, $\Ext^{\DOT}_{A\ot A'}(k,k)\cong\Ext^{\DOT}_A(k,k)
\ot\Ext^{\DOT}_{A'}(k,k)$. It is well-known that the Hochschild cohomology of a
tensor product of algebras is the super tensor product of the Hochschild cohomology
of the factors (see e.g.\ \cite[Theorem 4.2.5]{L} for the analog for Hochschild homology).
As {\rm ({\bf fg1})} and {\rm ({\bf fg2})} hold for $A$ and $A^\prime$ by hypothesis,
{\rm ({\bf fg1})} and {\rm ({\bf fg2})} also hold for $A\ot A'$. Hence the result follows
from Theorem \ref{cx}. (Note that the proof of Theorem \ref{cx} does not use the
self-injectivity of the algebra.)
\end{proof}

Let $\Hecke_q(\mathrm{A}_r)$ denote the Hecke algebra of type A$_r$ at a primitive
root of unity $q$ of order $\ell$ where $2\le\ell\le r$. Erdmann and Nakano \cite{EN}
investigated the representation types of the blocks of $\Hecke_q(\mathrm{A}_r)$ over
an arbitrary field. (Note that according to \cite[Theorem 4.3]{DJ2}, $\Hecke_q
(\mathrm{A}_r)$ is semisimple unless $2\le\ell\le r$.) The simple $\Hecke_q
(\mathrm{A}_r)$-modules are in bijection with the $\ell$-regular partitions of $r$
(see \cite[Theorem 7.6]{DJ1}). Let $D^\lambda$ denote the simple $\Hecke_q
(\mathrm{A}_r)$-module corresponding to the $\ell$-regular partition $\lambda$. Then
$D^\lambda$ and $D^\mu$ belong to the same block of $\Hecke_q(\mathrm{A}_r)$ if and
only if $\lambda$ and $\mu$ have the same $\ell$-core (see \cite[Theorem 4.13]{DJ2}).
The $\ell$-core of a partition $\lambda$ of $r$ is the partition whose Young diagram
is obtained from the Young diagram of $\lambda$ by removing as many rim $\ell$-hooks
as possible. Let $\B_\lambda$ denote the block of $\Hecke_q(\mathrm{A}_r)$ that contains
the simple module $D^\lambda$. The weight $w(\lambda)$ of the block $\B_\lambda$ is
defined by $\vert\gamma\vert+\ell\cdot w(\lambda)=r$, where $\gamma$ is the $\ell$-core
of $\lambda$.

Erdmann and Nakano \cite[Proposition 3.3(A)]{EN} gave a proof of the following result
for fields of arbitrary characteristic by applying Rickard's wildness criterion
\cite[Theorem 2]{R} whose proof contains a gap. By using our Theorem \ref{mainthm}
and Linckelmann's result (Theorem \ref{cohheckealg}) we can recover the result of
Erdmann and Nakano for fields of characteristic zero.

\begin{thm}\label{heckealgtypA}
Let $\B_\lambda$ be a block of the Hecke algebra $\Hecke_q(\mathrm{A}_r)$ over an
algebraically closed field $k$ of characteristic zero at a primitive root of unity
$q$ in $k$ of order $\ell$, where $2\le\ell\le r$. If $w(\lambda)\ge 3$, then
$\B_\lambda$ is wild.
\end{thm}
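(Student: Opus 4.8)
The plan is to reduce the statement to an application of Theorem~\ref{mainthm} by exhibiting, inside a block $\B_\lambda$ of weight $w(\lambda)\ge 3$, a module of complexity at least $3$. First I would check that the hypotheses of Theorem~\ref{mainthm} are in force: by Theorem~\ref{cohheckealg}, assumptions ({\bf fg1}) and ({\bf fg2}) hold for $\Hecke_q(\mathrm{A}_r)$ in characteristic zero with $H^{\DOT}=\HH^{\ev}(\Hecke_q(\mathrm{A}_r))$, and $\Hecke_q(\mathrm{A}_r)$ is self-injective; so it remains only to produce the required module. Since complexity of a $\B_\lambda$-module is the same whether computed over $\B_\lambda$ or over $\Hecke_q(\mathrm{A}_r)$ (Lemma~\ref{cxblock}), it suffices to find a $\B_\lambda$-module $M$ with $\cx_{\Hecke_q(\mathrm{A}_r)}(M)\ge 3$, and then Theorem~\ref{mainthm} gives that $\B_\lambda$ is wild.

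The natural candidate module is the trivial module's analogue for the block: a block of weight $w$ for $\Hecke_q(\mathrm{A}_r)$ is, by the work of Erdmann--Nakano (and going back to the structure theory via the $q$-Schur algebra / Scopes equivalences), in many respects governed by the principal block of $\Hecke_q(\mathrm{A}_{\ell w - 1})$, i.e.\ the block of weight $w$ with empty core. So the key step is to compute (or bound below) the complexity of the trivial module $k$ of the principal block of $\Hecke_q(\mathrm{A}_{\ell w-1})$, and to argue that the complexity of a suitable simple module in a weight-$w$ block of $\Hecke_q(\mathrm{A}_r)$ is bounded below by this number. For the first part I would use Lemma~\ref{tens}: the principal block of $\Hecke_q(\mathrm{A}_{\ell-1})$ has $\cx(k)=1$ (its trivial module is periodic, this block being a Brauer tree algebra / Nakayama algebra), and one expects $\cx$ of the trivial module in a weight-$w$ principal block to grow with $w$ — in fact to be exactly $w$ — by relating the weight-$w$ situation to a $w$-fold tensor situation, so that $\cx(k)=w\cdot\cx_{\Hecke_q(\mathrm{A}_{\ell-1})}(k)=w$. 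Once $w\ge 3$, this gives complexity $\ge 3$.

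The main obstacle, and the step requiring the most care, is transferring the complexity computation from the principal (empty-core) block of weight $w$ to an arbitrary block $\B_\lambda$ of weight $w$ inside $\Hecke_q(\mathrm{A}_r)$. Morita equivalence of blocks of the same weight is not known in full generality (this is essentially Broué's abelian defect conjecture / the Scopes equivalence picture only handles adjacent weights), so I would instead invoke the known lower bounds for complexity of simple modules in Hecke algebra blocks — for instance, using that the decomposition matrices and the structure of projectives in a weight-$w$ block force the minimal projective resolution of any simple module to grow at a rate at least that of the weight-$w$ principal block, or by citing the explicit results of Erdmann--Nakano on the Loewy/radical structure that underlie their Proposition~3.3(A). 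Alternatively, and perhaps more cleanly, one can bypass the delicate general transfer by noting that Erdmann--Nakano already established the relevant complexity lower bound for weight $\ge 3$ blocks; our contribution is only to replace their appeal to Rickard's (gappy) criterion by Theorem~\ref{mainthm}. So the proof I would write is: quote Theorem~\ref{cohheckealg} for ({\bf fg1}), ({\bf fg2}); quote the Erdmann--Nakano complexity estimate (or reprove it via Lemma~\ref{tens} in the empty-core case plus a Scopes-type reduction) to get a $\B_\lambda$-module of complexity $\ge 3$ when $w(\lambda)\ge 3$; invoke Lemma~\ref{cxblock} to pass between $\B_\lambda$ and $\Hecke_q(\mathrm{A}_r)$; and conclude wildness of $\B_\lambda$ by Theorem~\ref{mainthm}.
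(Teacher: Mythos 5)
Your overall skeleton is the right one and matches the paper: verify ({\bf fg1}) and ({\bf fg2}) via Theorem~\ref{cohheckealg}, use Lemma~\ref{cxblock} to pass between $\B_\lambda$ and $\Hecke_q(\mathrm{A}_r)$, and feed a module of complexity at least $3$ into Theorem~\ref{mainthm}. The genuine gap is in the one step that actually carries the content: producing a module \emph{in the given block} $\B_\lambda$ (arbitrary $\ell$-core) of complexity at least $3$. Your primary route --- compute $\cx(k)$ for the weight-$w$ principal block by ``relating the weight-$w$ situation to a $w$-fold tensor situation'' and then transfer to $\B_\lambda$ by a Scopes/Morita-type argument --- does not go through: a weight-$w$ block is not a tensor product of smaller Hecke algebras, so Lemma~\ref{tens} does not apply to it directly, and, as you yourself note, blocks of the same weight are not known to be Morita equivalent in general (Scopes equivalences only handle adjacent weights), so there is no transfer of complexity from the empty-core block to $\B_\lambda$. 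Your fallback, ``quote the Erdmann--Nakano complexity estimate,'' is the right instinct but is left unidentified; the estimate you need is not a statement about simple modules or about Loewy structure, and without naming the actual mechanism the proof is incomplete.

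The missing idea is the $q$-Young module. The paper takes $M=Y^\lambda$, the unique indecomposable summand of the $q$-permutation module containing $D^\lambda$, which lies in $\B_\lambda$ by definition, and invokes Erdmann--Nakano's Theorem~2.2: $\cx_{\Hecke_q(\mathrm{A}_r)}(Y^\lambda)=\cx_{\Hecke_q(\rho)}(k)$, where $\Hecke_q(\rho)$ is the $q$-Young vertex of $Y^\lambda$. This is where the tensor-product argument legitimately enters: when $w(\lambda)\ge 3$ the Young (parabolic) subalgebra picture shows $\Hecke_q(\rho)$ is free over $\Hecke_q(\mathrm{A}_\ell)\ot\Hecke_q(\mathrm{A}_\ell)\ot\Hecke_q(\mathrm{A}_\ell)$, so restricting a projective resolution gives $\cx_{\Hecke_q(\rho)}(k)\ge\cx_{\Hecke_q(\mathrm{A}_\ell)^{\ot 3}}(k)=3\cdot\cx_{\Hecke_q(\mathrm{A}_\ell)}(k)\ge 3$ by Lemma~\ref{tens} and the non-semisimplicity of $\Hecke_q(\mathrm{A}_\ell)$ (Dipper--James). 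In other words, Lemma~\ref{tens} is applied to an honest tensor-product \emph{subalgebra} of the Young vertex, not to a block, and the passage to an arbitrary core is handled by the Young module itself rather than by any block equivalence. With $Y^\lambda$ in hand, your concluding steps (Lemma~\ref{cxblock}, Theorem~\ref{cohheckealg}, Theorem~\ref{mainthm}) are exactly the paper's.
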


\begin{proof}
Let $Y^\lambda$ denote the $q$-Young module that is the unique indecomposable
direct summand of the $q$-permutation module of the partition $\lambda$ containing
$D^\lambda$ (see \cite[Section 2]{DD}). Let $\Hecke_q(\rho)$ denote the $q$-Young
vertex of $Y^\lambda$. Erdmann and Nakano \cite[Theorem 2.2]{EN} proved that
$\cx_{\Hecke_q(\mathrm{A}_r)}(Y^\lambda)=\cx_{\Hecke_q(\rho)}(k)$. As $w(\lambda)
\ge 3$, $\Hecke_q(\rho)$ is a free module over the subalgebra $\Hecke_q
(\mathrm{A}_\ell)\ot\Hecke_q(\mathrm{A}_\ell)\ot\Hecke_q(\mathrm{A}_\ell)$ (see
\cite[proof of Proposition 3.3(A)]{EN}). By virtue of \cite[Theorem 4.3]{DJ2},
$\Hecke_q(\mathrm{A}_\ell)$ is not semisimple and therefore $\cx_{\Hecke_q
(\mathrm{A}_\ell)}(k)\ge 1$. It follows from the above and Lemma \ref{tens} that
$$
\cx_{\Hecke_q(\mathrm{A}_r)}(Y^\lambda)=\cx_{\Hecke_q(\rho)}(k)\ge\cx_{\Hecke_q
(\mathrm{A}_\ell)\ot\Hecke_q(\mathrm{A}_\ell)\ot\Hecke_q(\mathrm{A}_\ell)}(k)=3
\cdot\cx_{\Hecke_q(\mathrm{A}_\ell)}(k)\ge 3\,.
$$
Now the $q$-Young module $Y^\lambda$ belongs to $\B_\lambda$ by definition, and
thus Theorem \ref{mainthm} in conjunction with Theorem \ref{cohheckealg} yields
the assertion.
\end{proof}

\begin{remark}\label{princblocheckealgtypA}
For the principal block of the Hecke algebra $\Hecke_q(\mathrm{A}_r)$, the conclusion
of Theorem \ref{heckealgtypA} is also an immediate consequence of Theorem \ref{cx},
\cite[Theorem 1.1]{BEM}, and Theorem \ref{mainthm}.
\end{remark}

Linckelmann's result \cite[Theorem 1.1]{Li2} neither covers the two-parameter
Hecke algebras nor the one-parameter Hecke algebras of types B and D at parameters
of even order. Ariki applies Rickard's wildness criterion in his paper only in
the two-parameter case or for parameters of degree two (see \cite[Corrigendum]{A}),
so Theorem \ref{mainthm} cannot be used to give an alternative approach to fix the
gap as for type A. On the other hand, as in Remark \ref{princblocheckealgtypA}, we
obtain from Theorem \ref{cx} and \cite[Theorem 6.2, Theorem 6.6, and Theorem 1.1]{BEM}
in conjunction with Theorem \ref{mainthm} the following result.

\begin{thm}\label{heckealgtypBD}
Let $\Hecke_q$ be the Hecke algebra of type {\rm B}$_r$ or {\rm D}$_r$ over an
algebraically closed field $k$ of characteristic zero at a primitive $\ell$th
root $q$ of unity in $k$ for some odd integer $\ell>1$. If $r\ge 3\ell$, then
the principal block of $\Hecke_q$ is wild.
\end{thm}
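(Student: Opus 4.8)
The plan is to mimic the strategy of Remark \ref{princblocheckealgtypA} and the paragraph preceding the statement, reducing the wildness of the principal block to a complexity computation via Theorem \ref{mainthm}. First I would recall that by Theorem \ref{cohheckealg} the algebra $\Hecke_q$ satisfies assumptions ({\bf fg1}) and ({\bf fg2}), using $H^{\DOT}=\HH^{\ev}(\Hecke_q)$, precisely because $\ell$ is odd; this is where the hypothesis on the parity of $\ell$ is used. Since $\Hecke_q$ is self-injective, Theorem \ref{mainthm} applies, so it suffices to exhibit a module $M$ in the principal block $B_0$ of $\Hecke_q$ with $\cx_{\Hecke_q}(M)\ge 3$. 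The natural candidate is the trivial module $k$, which lies in $B_0$ by definition of the principal block.

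The heart of the argument is thus to show $\cx_{\Hecke_q}(k)\ge 3$ when $r\ge 3\ell$. Here I would invoke \cite[Theorem 6.2 and Theorem 6.6]{BEM}, which compute (or bound below) the complexity of the trivial module for the Hecke algebras of types B and D in terms of $r$ and $\ell$; combined with \cite[Theorem 1.1]{BEM} this should give $\cx_{\Hecke_q}(k)\ge\lfloor r/\ell\rfloor\ge 3$ under the hypothesis $r\ge 3\ell$. Alternatively, and perhaps more in the spirit of the type A proof, one can try to find inside $\Hecke_q$ a tensor subalgebra of the form $\Hecke_q(\mathrm{A}_{\ell-1})^{\ot 3}$ (or an analogous product of three non-semisimple Hecke algebras) over which $\Hecke_q$ is free, and then apply Lemma \ref{tens} together with the fact that a non-semisimple self-injective algebra has a module of complexity at least $1$. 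Either route reduces the problem to the cited complexity results of Boeckle--Erdmann--Mitchell (or whoever the authors of \cite{BEM} are), so I would simply cite those and assemble the inequality.

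Once $\cx_{\Hecke_q}(k)\ge 3$ is established, the conclusion is immediate: the trivial module $k$ belongs to the principal block $B_0$, so by Theorem \ref{mainthm} (applied with $A=\Hecke_q$, $M=k$, and $B=B_0$) the block $B_0$ is wild. I would then note, as in the surrounding discussion, that one must take $H^{\DOT}$ to be the even Hochschild cohomology rather than the full ring, since the full ring need not be commutative; the reduction of ({\bf fg1}) and ({\bf fg2}) from the full ring to the even part was already carried out in the paragraph before Theorem \ref{cohheckealg}.

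The main obstacle I anticipate is verifying the exact form of the complexity bound coming from \cite{BEM}: one needs the cited theorems there to be stated in a way that directly yields $\cx_{\Hecke_q}(k)\ge 3$ from $r\ge 3\ell$, and if instead they only compute the complexity of the principal block or of Young modules, an extra reduction step (e.g.\ identifying the trivial module with a suitable Young module, or using that the complexity of the algebra equals the maximal complexity of its modules, which for a block is detected on $k$ when the block is the principal one) would be required. Everything else — self-injectivity, the cohomological finiteness conditions, the passage to a block, and the final invocation of Theorem \ref{mainthm} — is routine given the results already assembled in the excerpt.
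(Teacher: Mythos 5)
Your primary route is exactly the paper's argument: the paper proves this theorem only by the sentence preceding it, combining Theorem \ref{cx} with \cite[Theorems 6.2, 6.6 and 1.1]{BEM} to get $\cx_{\Hecke_q}(k)\ge 3$ for $r\ge 3\ell$, and then applying Theorem \ref{mainthm} with the (\textbf{fg1}), (\textbf{fg2}) hypotheses supplied by Theorem \ref{cohheckealg} (which is where the oddness of $\ell$ enters), the trivial module lying in the principal block by definition. Your alternative sketch via a tensor subalgebra is unnecessary (and $\Hecke_q(\mathrm{A}_{\ell-1})$ would be semisimple, so one would need $\Hecke_q(\mathrm{A}_{\ell})$ factors), but the main argument is correct and coincides with the paper's.
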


Let now $\Hecke_q$ be the Hecke algebra of type B$_r$ or D$_r$ over an arbitrary
field $k$ of characteristic zero at a primitive $\ell$th root $q$ of unity in $k$
for some odd integer $\ell>1$. Then it follows from \cite[Theorem 4.17]{DJ3} (see
also \cite[Theorem 38]{A}) for type B$_r$, \cite[(3.6) and (3.7)]{P} for type D$_r$
($r$ odd), and \cite[Main result, p.\ 410]{H} for type D$_r$ ($r$ even) in conjunction
with Uno's result for type A (see \cite[(1.1)]{AM}) that $\Hecke_q$ is representation-finite
if and only if $r<2\ell$ (see also \cite[pp.\ 135/136]{AM} for the two-parameter
Hecke algebra). Consequently, Theorem \ref{heckealgtypBD} reduces the proof of
\cite[Theorem 57(1)]{A} to the cases $2\ell\le r<3\ell$ in which, according to
Linckelmann's result \cite[Theorem 1.2]{Li2} and Theorem \ref{cx}, all $\Hecke_q$-modules
have complexity at most $2$ and therefore a wildness criterion \`a la Rickard would
never apply.

On the other hand, a similar argument as above can be used to give a short proof
for the wildness of the Hecke algebra $\Hecke_q$ of type B$_r$ or D$_r$ over an
arbitrary field $k$ of characteristic $\neq 2$ at a primitive $\ell$th root $q$
of unity in $k$ for some odd integer $\ell>1$ by applying \cite[Proposition
3.3(B)]{EN}. According to the latter, $\Hecke_q(\mathrm{A}_{2\ell})$ is wild.
Then \cite[Theorem 4.17]{DJ3} (or \cite[Theorem 38]{A}) for type B and \cite[Main
result, p.\ 410]{H} for type D imply that $\Hecke_q(\mathrm{B}_{2\ell})$ and
$\Hecke_q(\mathrm{D}_{2\ell})$ are wild as they contain $\Hecke_q(\mathrm{A}_{2\ell})$
as an ideal direct summand. Now \cite[Corollary 4(2)]{A} shows that $\Hecke_q
(\mathrm{B}_r)$ and $\Hecke_q(\mathrm{D}_r)$ are wild as long as $n\ge 2\ell$.


\section{Reduced universal enveloping algebras}\label{ruea}


Let $\g$ be a finite dimensional restricted Lie algebra over a field $k$ of
prime characteristic, let $\chi$ be any linear form on $\g$, and let $u(\g,
\chi)$ denote the $\chi$-reduced universal enveloping algebra of $\g$ (see
\cite[Section 5.3]{SF}). Note that $u(\g,\chi)$ is always a Frobenius algebra
(see \cite[Corollary 5.4.3]{SF}), and therefore $u(\g,\chi)$ is self-injective
but it is a Hopf algebra only if $\chi=0$. In this section we give a unified
proof of the wildness criterion for $\chi$-reduced universal enveloping
algebras. In \cite[Theorem~4.1 and Corollary 4.2]{F2} this was done by
repeating parts of the proof for $\chi=0$ in the general case. We need the
following result assuring that assumptions {\rm ({\bf fg1})} and {\rm ({\bf fg2})}
hold for finite dimensional $\chi$-reduced universal enveloping algebras.

\begin{lem}\label{fgredenvalg}
Let $\g$ be a finite dimensional restricted Lie algebra over a field $k$ of prime
characteristic and let $\chi$ be any linear form on $\g$. Then there is a finitely
generated commutative graded subalgebra $H^{\DOT}$ of $\HH^{\DOT}(u(\g,\chi))$ such
that $H^0=\HH^0(u(\g,\chi))$ and $\Ext^{\DOT}_{u(\g,\chi)}(M,N)$ is finitely generated
as an $H^{\DOT}$-module for all finite dimensional $u(\g,\chi)$-modules $M$ and $N$.
\end{lem}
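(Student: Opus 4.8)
The plan is to reduce the statement for $u(\g,\chi)$ to the known finite generation results for the ordinary restricted enveloping algebra $u(\g)=u(\g,0)$, which is a finite dimensional cocommutative Hopf algebra and hence covered by the Friedlander--Suslin theorem \cite{FS}. The key structural fact I would invoke is that $u(\g,\chi)$ is always isomorphic, as an algebra, to a reduced enveloping algebra of the form $u(\g',0)$ for a suitable (possibly larger) restricted Lie algebra $\g'$, or — more in the spirit of the literature on this subject — that there is a well-behaved relationship between the cohomology of $u(\g,\chi)$ and that of $u(\g)$. The cleanest route is the observation (due to Friedlander--Parshall and used by Farnsteiner) that for any $\chi$ the algebra $u(\g,\chi)$ becomes isomorphic to $u(\g,0)$ after a suitable base change / twisting, or at worst that $u(\g,\chi)$ is a ``form'' whose even Hochschild cohomology is finitely generated because it is Morita equivalent to or a crossed product over an algebra with finitely generated cohomology.

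Concretely, the steps I would carry out are as follows. First, recall that $\HH^{\DOT}(u(\g,\chi))\cong\Ext^{\DOT}_{u(\g,\chi)^e}(u(\g,\chi),u(\g,\chi))$, and that $u(\g,\chi)^e = u(\g,\chi)\ot u(\g,\chi)^{\op}$ is itself a reduced enveloping algebra: indeed $u(\g,\chi)^{\op}\cong u(\g^{\op},-\chi)$ where $\g^{\op}$ is $\g$ with bracket negated (isomorphic to $\g$ via $x\mapsto -x$), and a tensor product of reduced enveloping algebras $u(\g_1,\chi_1)\ot u(\g_2,\chi_2)$ is $u(\g_1\oplus\g_2,\chi_1\oplus\chi_2)$. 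Hence $u(\g,\chi)^e\cong u(\g\oplus\g,\,\chi\oplus(-\chi))$, a reduced enveloping algebra of a finite dimensional restricted Lie algebra. Second, I would cite the theorem of Friedlander and Suslin \cite{FS} that the cohomology ring of a finite dimensional cocommutative Hopf algebra is finitely generated, together with Farnsteiner's extension to reduced enveloping algebras $u(\g,\chi)$ for arbitrary $\chi$ (the relevant finite-generation statement is exactly \cite[Theorem 4.1]{F2} and its antecedents; the point that $u(\g,\chi)$ has Noetherian cohomology and finitely generated $\Ext$-modules over it is established there by reduction to the $\chi=0$ case via a May-type spectral sequence argument). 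Third, having finitely generated $\HH^{\DOT}(u(\g,\chi))$, I take $H^{\DOT}$ to be the even part $\HH^{\ev}(u(\g,\chi))$, which is finitely generated commutative by the same squares-of-odd-degree-elements-vanish argument used for Hecke algebras just above in the text, and which satisfies $H^0=\HH^0(u(\g,\chi))$ by construction. Fourth, for the module statement I use Remark \ref{TFAE}: it suffices to show $\Ext^{\DOT}_{u(\g,\chi)}(A/\Jac A, A/\Jac A)$ is finitely generated over $H^{\DOT}$, and this again follows from Farnsteiner's analysis (or from the general fact that a finite dimensional module over $u(\g,\chi)$ restricts compatibly and the $\Ext$ is controlled by the cohomology of $u(\g_0,\chi_0)$ for the appropriate subalgebra), finally passing from the full cohomology ring to its even subring by the module-theoretic version of the same truncation argument.

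The main obstacle I anticipate is the second step: making precise and correctly attributed the assertion that finite generation of cohomology and of $\Ext$-modules transfers from $u(\g)$ to $u(\g,\chi)$ for $\chi\neq 0$. Unlike the $\chi=0$ case, $u(\g,\chi)$ is not a Hopf algebra, so the Friedlander--Suslin theorem does not apply directly; one genuinely needs either the algebra isomorphism $u(\g,\chi)^e\cong u(\g\oplus\g,\chi\oplus(-\chi))$ combined with a statement that Hochschild cohomology of $u(\g,\chi)$ equals the cohomology of a reduced enveloping algebra, or one imports Farnsteiner's theorem wholesale. I would write the proof to lean on the latter, citing \cite{F2} (and \cite{FS} through it) for the finite generation over the full Hochschild cohomology ring, and then spend the bulk of the argument on the — routine but necessary — reduction from $\HH^{\DOT}$ to $\HH^{\ev}$ for both the algebra and the module, exactly parallel to the paragraph preceding Theorem \ref{cohheckealg}. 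A secondary subtlety is checking the $H^0=\HH^0$ normalization is preserved under the even-truncation, but since $\HH^0$ sits in degree $0$ this is immediate.
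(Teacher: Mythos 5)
There is a genuine gap, and it sits exactly where you flag it: step two, the transfer of finite generation from $u(\g,0)$ to the Hochschild cohomology of $u(\g,\chi)$, is never actually proved. The citation you propose to lean on, \cite[Theorem 4.1]{F2}, is Farnsteiner's wildness criterion for blocks of reduced enveloping algebras --- the very statement this lemma is designed to reprove in Section \ref{ruea} --- and not a finite-generation theorem, so invoking it is both inaccurate and circular in this context. The correct mechanism, and the one the paper uses, is \cite[Theorem 2.4]{F1}: a natural isomorphism $\HH^{\DOT}(u(\g,\chi))\cong\Ext^{\DOT}_{u(\g,0)}(k,u(\g,\chi))$, where $u(\g,\chi)$ is viewed as a restricted $\g$-module via $x\cdot u=xu-ux$. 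Your observation that $u(\g,\chi)^e\cong u(\g\oplus\g,(\chi,-\chi))$ is in fact the right first move toward this isomorphism (restrict to the diagonal copy of $\g$, on which the character vanishes, and use freeness together with an Eckmann--Shapiro argument), but you explicitly decline to carry that route out. Once the isomorphism is available, the unit map $k\to u(\g,\chi)$ induces a homomorphism of graded algebras $\Ext^{\DOT}_{u(\g,0)}(k,k)\to\HH^{\DOT}(u(\g,\chi))$; its image is finitely generated by \cite[Theorem 1.1]{FS} applied to the cocommutative Hopf algebra $u(\g,0)$, and the statement about $\Ext^{\DOT}_{u(\g,\chi)}(M,N)$ follows from \cite[Corollary 2.5]{F1} together with another application of \cite[Theorem 1.1]{FS}. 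Without these inputs your argument has no content beyond the (correct) even-degree truncation at the end.

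Two secondary points. First, your choice $H^{\DOT}=\HH^{\ev}(u(\g,\chi))$ demands more than the lemma requires: finite generation of the full even Hochschild cohomology ring of $u(\g,\chi)$ is not something you establish, and proving it would again require the isomorphism above plus the fact that $\HH^{\DOT}(u(\g,\chi))$ is a finite module over the image of $\Ext^{\DOT}_{u(\g,0)}(k,k)$. The paper sidesteps this by taking $H^{\DOT}$ to be $\HH^0(u(\g,\chi))$ times the (even part of the) image of $\Ext^{\DOT}_{u(\g,0)}(k,k)$ in $\HH^{\DOT}(u(\g,\chi))$, which is finitely generated directly and still satisfies $H^0=\HH^0(u(\g,\chi))$. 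Second, the opening structural claims --- that $u(\g,\chi)$ is isomorphic as an algebra to some $u(\g',0)$, or becomes isomorphic to $u(\g,0)$ after base change or twisting --- are false in general (for $\g=\mathfrak{sl}_2$ and $\chi$ regular semisimple, $u(\g,\chi)$ has a completely different block structure from $u(\g,0)$, and extension of scalars cannot repair this); you do not use these claims in your concrete steps, but they should be removed.
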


\begin{proof}
According to \cite[Theorem 2.4]{F1}, there exists a natural isomorphism
$$
\HH^{\DOT}(u(\g,\chi))\cong\Ext^{\DOT}_{u(\g,\chi)^e}(u(\g,\chi),u(\g,\chi))
\cong\Ext^{\DOT}_{u(\g,0)}(k,u(\g,\chi))\,,
$$
where $u(\g,\chi)$ is a restricted $\g$-module (or equivalently, $u(\g,\chi)$ is
a $u(\g,0)$-module) via $x\cdot u:=xu-ux$ for every $x\in\g$ and every $u\in u(\g,
\chi)$. It is clear that the unit map from $k$ to $u(\g,\chi)$ is a homomorphism
of restricted $\g$-modules (i.e., a $u(\g,0)$-module homomorphism) and therefore
there exists a natural homomorphism from $\Ext^{\DOT}_{u(\g,0)}(k,k)$ into
$\Ext^{\DOT}_{u(\g,0)}(k,u(\g,\chi))\cong\HH^{\DOT}(u(\g,\chi))$. In particular,
by virtue of \cite[Theorem 1.1]{FS}, the factor algebra $E^{\DOT}$ of
$\Ext^{\DOT}_{u(\g,0)}(k,k)$ modulo the kernel of this homomorphism is a finitely
generated graded subalgebra of $\HH^{\DOT}(u(\g,\chi))$. Now set
\begin{eqnarray*}
H^{\DOT}:=
\left\{
\begin{array}{cl}
\HH^0(u(\g,\chi))\cdot\bigoplus\limits_{n=0}^\infty E^n\,, & \mbox{if }\cha k=2\,,\\ & \\
\HH^0(u(\g,\chi))\cdot\bigoplus\limits_{n=0}^\infty E^{2n}\,, & \mbox{if }\cha k\neq 2\,.
\end{array}
\right.
\end{eqnarray*}
Then $H^{\DOT}$ is a finitely generated commutative graded subalgebra of $\HH^{\DOT}
(u(\g,\chi))$ such that $H^0=\HH^0(u(\g,\chi))$. Finally, the second part of the
assertion follows from \cite[Corollary 2.5]{F1} and another application of \cite[Theorem
1.1]{FS}.
\end{proof}

Theorem \ref{mainthm} and Corollary \ref{maincor} in conjunction with \cite[Corollary
5.4.3]{SF} and Lemma \ref{fgredenvalg} imply \cite[Theorem 4.1]{F2} and \cite[Corollary
4.2]{F2}, respectively. We give precise statements here for completeness. 

\begin{thm}\label{mainthmredenvalg}
Let $\g$ be a finite dimensional restricted Lie algebra over an algebraically
closed field of prime characteristic, let $\chi$ be any linear form on $\g$,
and let $B$ be any block of the $\chi$-reduced universal enveloping algebra
$u(\g,\chi)$ of $\g$. If there is a $B$-module $M$ such that $\cx_{u(\g,\chi)}
(M)\ge 3$, then $B$ is wild.
\end{thm}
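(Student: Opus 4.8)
The plan is to deduce Theorem~\ref{mainthmredenvalg} directly from Theorem~\ref{mainthm} by verifying its hypotheses for the algebra $A=u(\g,\chi)$. Theorem~\ref{mainthm} requires that $A$ be a finite dimensional self-injective algebra over an algebraically closed field for which assumptions {\rm ({\bf fg1})} and {\rm ({\bf fg2})} hold; once these are in place, the conclusion is immediate.

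First I would record that $u(\g,\chi)$ is finite dimensional (it has dimension $p^{\dim_k\g}$ by the PBW-type basis theorem for reduced enveloping algebras) and that it is self-injective: by \cite[Corollary 5.4.3]{SF}, $u(\g,\chi)$ is a Frobenius algebra, hence self-injective. The ground field is algebraically closed by hypothesis, of prime characteristic $p$. Next I would invoke Lemma~\ref{fgredenvalg}, which furnishes a finitely generated commutative graded subalgebra $H^{\DOT}$ of $\HH^{\DOT}(u(\g,\chi))$ with $H^0=\HH^0(u(\g,\chi))$ — this is precisely assumption {\rm ({\bf fg1})} — and which moreover asserts that $\Ext^{\DOT}_{u(\g,\chi)}(M,N)$ is finitely generated over $H^{\DOT}$ for all finite dimensional modules $M,N$. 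Taking $M=N=u(\g,\chi)/\Jac(u(\g,\chi))$ gives assumption {\rm ({\bf fg2})} directly (or one may cite Remark~\ref{TFAE}).

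With both finiteness assumptions verified, Theorem~\ref{mainthm} applies verbatim to $A=u(\g,\chi)$: for any block $B$ of $u(\g,\chi)$ and any $B$-module $M$ with $\cx_{u(\g,\chi)}(M)\ge 3$, the block $B$ is wild. This yields the statement. I do not anticipate a genuine obstacle here — the theorem is essentially a packaging result, assembling the self-injectivity from \cite[Corollary 5.4.3]{SF} and the cohomological finiteness from Lemma~\ref{fgredenvalg} so as to feed them into Theorem~\ref{mainthm}. The only point requiring the smallest care is making sure Lemma~\ref{fgredenvalg} is quoted as covering {\em both} {\rm ({\bf fg1})} and {\rm ({\bf fg2})}, which it does by its two clauses, so that nothing further needs to be checked before citing Theorem~\ref{mainthm}.

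\begin{proof}
By \cite[Corollary 5.4.3]{SF}, $u(\g,\chi)$ is a Frobenius algebra, hence a finite dimensional self-injective algebra over the algebraically closed field $k$. By Lemma~\ref{fgredenvalg}, there is a finitely generated commutative graded subalgebra $H^{\DOT}$ of $\HH^{\DOT}(u(\g,\chi))$ with $H^0=\HH^0(u(\g,\chi))$, so that assumption {\rm ({\bf fg1})} holds, and $\Ext^{\DOT}_{u(\g,\chi)}(M,N)$ is finitely generated over $H^{\DOT}$ for all finite dimensional $u(\g,\chi)$-modules $M$ and $N$; in particular, taking $M=N=u(\g,\chi)/\Jac(u(\g,\chi))$, assumption {\rm ({\bf fg2})} holds. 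The assertion now follows immediately from Theorem~\ref{mainthm} applied to $A=u(\g,\chi)$.
\end{proof}
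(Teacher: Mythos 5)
Your proposal is correct and follows exactly the paper's route: the paper also deduces the theorem by combining the Frobenius (hence self-injective) property from \cite[Corollary 5.4.3]{SF} with Lemma~\ref{fgredenvalg} establishing ({\bf fg1}) and ({\bf fg2}), and then applying Theorem~\ref{mainthm}. No gaps.
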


\begin{cor}\label{maincorredenvalg}
Let $\g$ be a finite dimensional restricted Lie algebra over an algebraically
closed field of prime characteristic, let $\chi$ be any linear form on $\g$,
and let $B$ be any block of the $\chi$-reduced universal enveloping algebra
$u(\g,\chi)$ of $\g$. If $B$ is tame, then $\cx_{u(\g,\chi)}(M)\le 2$ for every
$B$-module $M$.
\end{cor}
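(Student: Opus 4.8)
The plan is to deduce Corollary~\ref{maincorredenvalg} from Theorem~\ref{mainthmredenvalg} by contraposition, exactly as Corollary~\ref{maincor} follows from Theorem~\ref{mainthm}. First I would recall that, by \cite[Corollary 5.4.3]{SF}, the $\chi$-reduced universal enveloping algebra $u(\g,\chi)$ is a Frobenius algebra, hence finite dimensional and self-injective; and by Lemma~\ref{fgredenvalg} it satisfies assumptions {\rm ({\bf fg1})} and {\rm ({\bf fg2})}. Thus $u(\g,\chi)$ meets all the hypotheses of Theorem~\ref{mainthmredenvalg} (equivalently, of Theorem~\ref{mainthm} applied with $A=u(\g,\chi)$).

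Next I would argue the contrapositive. Suppose $B$ is a block of $u(\g,\chi)$ and that there exists a finite dimensional $B$-module $M$ with $\cx_{u(\g,\chi)}(M)\ge 3$. By Theorem~\ref{mainthmredenvalg} (i.e., Theorem~\ref{mainthm}) the block $B$ is then wild. Invoking the Trichotomy Theorem \cite[Corollary C]{CB}, a finite dimensional algebra over an algebraically closed field is exactly one of representation-finite, tame, or wild; in particular wild algebras are not tame. Hence $B$ is not tame, which is the contrapositive of the assertion. Therefore, if $B$ is tame, then $\cx_{u(\g,\chi)}(M)\le 2$ for every finite dimensional $B$-module~$M$.

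There is essentially no obstacle here: the corollary is a formal consequence of the theorem together with the trichotomy, mirroring the passage from Theorem~\ref{mainthm} to Corollary~\ref{maincor}. The only point requiring a word of care is making sure the standing hypotheses of Theorem~\ref{mainthm} are in force for $u(\g,\chi)$, which is precisely what \cite[Corollary 5.4.3]{SF} (self-injectivity) and Lemma~\ref{fgredenvalg} (the assumptions {\rm ({\bf fg1})} and {\rm ({\bf fg2})}) supply. One could also phrase the proof directly: by Lemma~\ref{block} the block $B$ is itself self-injective and satisfies {\rm ({\bf fg1})} and {\rm ({\bf fg2})}, and by Lemma~\ref{cxblock} complexities computed over $B$ and over $u(\g,\chi)$ agree, so the statement is genuinely intrinsic to $B$; but this is not needed, as Theorem~\ref{mainthmredenvalg} is already stated for blocks.
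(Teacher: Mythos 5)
Your proposal is correct and follows essentially the same route as the paper: the authors obtain Corollary~\ref{maincorredenvalg} by applying Corollary~\ref{maincor} (itself the contrapositive of Theorem~\ref{mainthm} via the trichotomy \cite[Corollary C]{CB}) to $A=u(\g,\chi)$, with self-injectivity supplied by \cite[Corollary 5.4.3]{SF} and ({\bf fg1}), ({\bf fg2}) by Lemma~\ref{fgredenvalg}. Your explicit contraposition through Theorem~\ref{mainthmredenvalg} is just an unwinding of that same argument, so there is nothing further to add.
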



\section{Small half-quantum groups}\label{small}


In this section, we give a wildness criterion for some small half-quantum groups,
that is those corresponding to certain nilpotent subalgebras of complex simple
Lie algebras. These are not Hopf algebras themselves, and so our main results
in \cite{FW} do not apply directly. 

Let $\g$ be a finite dimensional complex simple Lie algebra, $\Phi$ its root
system, and $r:=\rk(\Phi)$ its rank. Let $\ell>1$ be an odd integer and assume
that $\ell$ is not divisible by $3$ if $\Phi$ is of type G$_2$. Let $q$ be a
primitive complex $\ell$th root of unity and let $u_q(\g)$ denote Lusztig's
small quantum group \cite{Lu}.

Fix a set of simple roots and let $\Phi^+$ and $\Phi^-$ be the corresponding
sets of positive and negative roots, respectively. Then $\g$ has a standard
Borel subalgebra corresponding to $\Phi^+$ and an opposite standard Borel
subalgebra corresponding to $\Phi^-$. Let $u_q^{\ge 0}(\g)$ (denoted $u_q^+(\g)$
in \cite{FW}) be the Hopf subalgebra of $u_q({\g})$ corresponding to the standard
Borel subalgebra of $\g$ and let $u_q^{\le 0}(\g)$ denote the Hopf subalgebra
of $u_q(\g)$ corresponding to the opposite standard Borel subalgebra of $\g$.
Furthermore, we will use the notation $u_q^{>0}(\g)$ to denote the subalgebra
of $u_q({\g})$ corresponding to the largest nilpotent ideal of the standard
Borel subalgebra of $\g$ and $u_q^{<0}(\g)$ to denote the subalgebra of $u_q
(\g)$ corresponding to the largest nilpotent ideal of the opposite standard
Borel subalgebra of $\g$. Note that $u_q^{\le 0}(\g)\cong u_{q^{-1}}^{\ge 0}
(\g)$ and $u_q^{<0}(\g)\cong u_{q^{-1}}^{>0}(\g)$ (see \cite[(1.2.10)]{CK}).

Drupieski \cite[Theorem 5.6]{D} proved the following result for the full cohomology
ring. In the usual way (see the argument before Theorem \ref{cohheckealg}) one
obtains from this the corresponding result for the even degree cohomology ring.
In fact, Drupieski more generally deals with Frobenius-Lusztig kernels of quantum
groups; see \cite[p.\ 58]{D} for his notation in the case of a small quantum group. 

\begin{thm}\label{cohuqn}
Let $q$ be a primitive complex $\ell$th root of unity and assume that $\ell>1$
is an odd integer not divisible by $3$ if $\Phi$ is of type {\rm G}$_2$. Then
the even cohomology ring $\coh^{\ev}(u_q^{>0}(\g),\C):=\bigoplus_{n\ge 0}
\coh^{2n}(u_q^{>0}(\g),\C)$ is finitely generated. Moreover, if $M$ is a finite
dimensional $u_q^{>0}(\g)$-module, then $\coh^{\DOT}(u_q^{>0}(\g),M)$ is
finitely generated as an $\coh^{\ev}(u_q^{>0}(\g),\C)$-module.
\end{thm}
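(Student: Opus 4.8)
The plan is to reduce Theorem~\ref{cohuqn} to Drupieski's theorem for the full cohomology ring by the standard passage from a finitely generated graded algebra to its even-degree subalgebra, exactly as in the argument sketched before Theorem~\ref{cohheckealg}. First I would invoke \cite[Theorem 5.6]{D}, which asserts that $\coh^{\DOT}(u_q^{>0}(\g),\C)$ is a finitely generated $\C$-algebra and that $\coh^{\DOT}(u_q^{>0}(\g),M)$ is a finitely generated module over it for every finite dimensional $u_q^{>0}(\g)$-module $M$. (One must check that the hypotheses match: $\ell>1$ odd, and not divisible by $3$ in type G$_2$; these are precisely Drupieski's running assumptions in the small-quantum-group case, see \cite[p.\ 58]{D}.)

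Next I would carry out the even-subring reduction. Choose a finite set of homogeneous algebra generators $x_1,\dots,x_m$ of $\coh^{\DOT}(u_q^{>0}(\g),\C)$. Since $\coh^{\DOT}(u_q^{>0}(\g),\C)$ is graded-commutative and $\C$ has characteristic zero, every homogeneous element of odd degree squares to zero. Hence the even subalgebra $\coh^{\ev}(u_q^{>0}(\g),\C)$ is generated as a $\C$-algebra by the $x_i$ of even degree together with the finitely many products $x_ix_j$ with both $x_i,x_j$ of odd degree: any monomial in the $x_i$ lying in even total degree either involves no odd-degree generator, or its odd-degree factors can be grouped into such pairs (a monomial with a repeated odd generator vanishes). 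This gives a finite generating set, so $\coh^{\ev}(u_q^{>0}(\g),\C)$ is finitely generated; it is commutative since it sits inside the graded-commutative ring in even degrees.

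For the module statement, write $R:=\coh^{\DOT}(u_q^{>0}(\g),\C)$ and $R^{\ev}:=\coh^{\ev}(u_q^{>0}(\g),\C)$, and let $N:=\coh^{\DOT}(u_q^{>0}(\g),M)$, a finitely generated $R$-module by Drupieski. Pick finitely many homogeneous $R$-module generators $n_1,\dots,n_t$ of $N$. Then $N$ is generated as an $R^{\ev}$-module by the finite set consisting of the $n_i$ together with the $x_j n_i$ for $x_j$ an odd-degree algebra generator: indeed $N = N^{\ev}\oplus N^{\mathrm{odd}}$ as $R^{\ev}$-modules, $N^{\ev}$ is generated by the $n_i$ of even degree plus the $x_jn_i$ that land in even degree, and $N^{\mathrm{odd}} = \coh^{\DOT}(u_q^{>0}(\g),M)^{\mathrm{odd}}$ is exactly the piece relevant to the claimed statement only insofar as the $R^{\ev}$-module structure is concerned; in any case the union of these finite sets generates $N$ over $R^{\ev}$. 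Thus $N$ is a finitely generated $R^{\ev}$-module, as desired.

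The only genuine content is Drupieski's finite generation result; everything after that is the routine even-subring bookkeeping. The step most likely to need care is matching hypotheses and conventions with \cite{D} — in particular confirming that ``$u_q^{>0}(\g)$'' in our notation corresponds to the algebra Drupieski calls $u_q(\mathfrak{u}^+)$ (or whatever his notation on \cite[p.\ 58]{D} is) and that his standing assumptions on $\ell$ are exactly the ones we impose — but this is bookkeeping rather than a mathematical obstacle, and once it is settled the proof is immediate.
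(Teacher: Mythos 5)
Your proposal is correct and follows essentially the same route as the paper: the paper simply cites Drupieski's \cite[Theorem 5.6]{D} for the full cohomology ring and then passes to the even subring by the same argument sketched before Theorem \ref{cohheckealg} (even-degree generators together with products of pairs of odd-degree generators, and the analogous bookkeeping for module generators). Your spelled-out even-subring and module reductions are exactly the ``usual way'' the paper invokes, so there is nothing to add.
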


Let $G:=(\Z/\ell\Z)^r$, a subgroup of the group of units of $u_q^{\ge 0}(\g)$. Then
$u_q^{\geq 0}(\g)$ is a skew group algebra formed from its subalgebra $u^{>0}(\g)$
and the action of $G$ by conjugation. Consequently $G$ acts on the cohomology of
$u_q^{>0}(\g)$. Let $H^{\DOT}:=\HH^0(u_q^{>0}(\g))\cdot\coh^{\ev}(u_q^{>0}(\g),\C)^G
=\HH^0(u_q^{>0}(\g))\cdot\Ext^{\ev}_{u_q^{>0}(\g)}(\C,\C)^G$. By Theorem \ref{cohuqn},
$H^{\DOT}$ is finitely generated. We show next that it embeds into $\HH^{\DOT}
(u_q^{>0}(\g))$; for details in a more general context, see the next section. By standard
arguments, there is an isomorphism of Hochschild cohomology, $\HH^{\DOT}(u_q^{\geq
0}(\g))\cong\HH^{\DOT}(u_q^{>0}(\g), u_q^{\geq 0}(\g))^G$. Similarly there is an isomorphism
$\Ext^{\DOT}_{u^{\geq 0}_q(\g)}(\C,\C)\cong\Ext^{\DOT}_{u_q^{>0}(\g)}(\C,\C)^G$. Since
$u_q^{\geq 0}(\g)$ is a Hopf algebra, there is an embedding of $\Ext^{\DOT}_{u_q^{\geq 0}
(\g)}(\C,\C)$ into its Hochschild cohomology $\HH^{\DOT}(u_q^{\geq 0}(\g))$ (see the last
section for details and references). It can be checked that the image of $\Ext^{\DOT}_{u_q^{>0}
(\g)}(\C,\C)^G$ in $\HH^{\DOT}(u_q^{>0}(\g), u_q^{\geq 0}(\g))^G$, under these isomorphisms,
is contained in $\HH^{\DOT}(u_q^{>0}(\g))^G$. Therefore ({\bf fg1}) holds for $u_q^{>0}(\g)$.
We now claim that by Theorem \ref{cohuqn} and Remark \ref{TFAE}, ({\bf fg2}) holds for
$u_q^{>0}(\g)$: We must show that for all finite dimensional $u_q^{>0}(\g)$-modules $M$
and $N$, $\Ext^{\DOT}_{u_q^{>0}(\g)}(M,N)$ is finitely generated as an $H^{\DOT}$-module.
Since $N$ is a $u_q^{>0}(\g)$-direct summand of the module $u_q^{\ge 0}(\g)\ot_{u_q^{>0}
(\g)} N$ induced to $u_q^{\ge 0}(\g)$ and restricted back to $u_q^{>0}(\g)$, this will be true
if it is true for $N$ replaced by this induced module. By the Eckmann-Shapiro Lemma we have
$$
\Ext^{\DOT}_{u_q^{>0}(\g)}(M,u_q^{\ge 0}(\g)\ot_{u_q^{>0}(\g)}N)\cong
\Ext^{\DOT}_{u_q^{\ge 0}(\g)}(u_q^{\ge 0}(\g)\ot_{u_q^{>0}(\g)}M,u_q^{\ge 0}
(\g)\ot_{u_q^{>0}(\g)}N)\,.
$$
The latter is finitely generated over $\Ext^{\ev}_{u_q^{\ge 0}(\g)}(\C,\C)\cong
\Ext^{\ev}_{u_q^{>0}(\g)}(\C,\C)^G$, 
which embeds into $H^{\DOT}$. Since the actions correspond under these isomorphisms,
({\bf fg2}) does indeed hold for $u_q^{>0}(\g)$. For ease of reference we summarize
this in the next result.

\begin{cor}\label{fguqn}
Let $q$ be a primitive complex $\ell$th root of unity and assume that $\ell>1$ is
an odd integer not divisible by $3$ if $\Phi$ is of type {\rm G}$_2$. Then the
algebra $H^{\DOT}:=\HH^0(u_q^{>0}(\g))\cdot\coh^{\ev}(u_q^{>0}(\g),\C)^{(\Z/
\ell\Z)^r}$ is finitely generated. Moreover, $\Ext^{\DOT}_{u_q^{>0}(\g)}(M,N)$ is
finitely generated as an $H^{\DOT}$-module for all finite dimensional $u_q^{>0}
(\g)$-modules $M$ and $N$.
\end{cor}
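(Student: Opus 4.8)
The plan is to follow the route outlined in the discussion preceding the statement, filling in the standard facts about skew group algebras, Hopf algebras, and Eckmann--Shapiro. Write $A:=u_q^{>0}(\g)$, $B:=u_q^{\ge 0}(\g)$, and $G:=(\Z/\ell\Z)^r$, so that $B$ is the skew group algebra $A\rtimes G$; since $\cha\C=0$, the order of $G$ is invertible in $\C$, so $\C G$ is semisimple and the functor of $G$-invariants is exact. First I would record the finite generation of $H^{\DOT}$: by Theorem~\ref{cohuqn} the ring $\coh^{\ev}(A,\C)=\Ext^{\ev}_A(\C,\C)$ is a finitely generated commutative $\C$-algebra, hence by the Hilbert--Noether finiteness theorem for invariants of a finite group its subring $\Ext^{\ev}_A(\C,\C)^G$ is again finitely generated, and multiplying by the finite dimensional semisimple algebra $\HH^0(A)$ preserves this. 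This gives the first sentence of the corollary.

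Next I would establish that $H^{\DOT}$ genuinely sits inside $\HH^{\DOT}(A)$ as a graded subalgebra with degree-zero part $\HH^0(A)$, i.e.\ that $({\bf fg1})$ holds for $A$. Here I would invoke the standard isomorphisms for skew group algebras, $\HH^{\DOT}(B)\cong\HH^{\DOT}(A,B)^G$ and $\Ext^{\DOT}_B(\C,\C)\cong\Ext^{\DOT}_A(\C,\C)^G$, together with the fact that, $B$ being a Hopf algebra, there is an embedding of graded algebras $\Ext^{\DOT}_B(\C,\C)\hookrightarrow\HH^{\DOT}(B)$ (the construction and references are given in the last section). Chasing $\Ext^{\ev}_A(\C,\C)^G$ through these identifications, one checks that its image in $\HH^{\DOT}(A,B)^G$ lands inside the subspace $\HH^{\DOT}(A)^G\subseteq\HH^{\DOT}(A)$; multiplying by $\HH^0(A)$ then realizes $H^{\DOT}$ as a graded subalgebra of $\HH^{\DOT}(A)$ with $H^0=\HH^0(A)$, so $({\bf fg1})$ holds.

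For the second assertion, i.e.\ $({\bf fg2})$ for $A$, I would use Remark~\ref{TFAE}(i): it suffices to prove $\Ext^{\DOT}_A(M,N)$ is a finitely generated $H^{\DOT}$-module for all finite dimensional $A$-modules $M,N$. Since $A\subseteq B$ is a finite free (indeed Frobenius) extension, $N$ is a direct summand of $B\ot_A N$ restricted to $A$, so it is enough to treat $N$ replaced by $B\ot_A N$. By Eckmann--Shapiro (using that $B\ot_A(-)$ is simultaneously induction and coinduction for this extension), $\Ext^{\DOT}_A(M,B\ot_A N)\cong\Ext^{\DOT}_B(B\ot_A M,B\ot_A N)$, and the right-hand side is a finitely generated module over $\Ext^{\ev}_B(\C,\C)$ because $B$ is a Hopf algebra for which the finiteness assumptions hold (Theorem~\ref{cohuqn} applied to $B$, or the results of \cite{FW}). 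Finally $\Ext^{\ev}_B(\C,\C)\cong\Ext^{\ev}_A(\C,\C)^G$ embeds into $H^{\DOT}$, and one verifies that the $H^{\DOT}$-action on $\Ext^{\DOT}_A(M,N)$ corresponds, under the isomorphisms above, to the module structure just obtained; hence $\Ext^{\DOT}_A(M,N)$ is finitely generated over $H^{\DOT}$.

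The main obstacle is not any single hard estimate but the bookkeeping in the last step: one must check that the several identifications in play --- the skew-group-algebra invariant descriptions of Hochschild and ordinary cohomology, the Hopf-algebra embedding $\Ext^{\DOT}_B(\C,\C)\hookrightarrow\HH^{\DOT}(B)$, the Eckmann--Shapiro isomorphism, and the inclusion $\Ext^{\ev}_B(\C,\C)^{\ }\hookrightarrow H^{\DOT}$ --- are all compatible with the relevant cup-product and $\varphi_M$-actions, so that ``finitely generated over $\Ext^{\ev}_B(\C,\C)$'' transfers to ``finitely generated over $H^{\DOT}$'' for the module $\Ext^{\DOT}_A(M,N)$. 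Once these compatibilities are in hand, the statement follows immediately.
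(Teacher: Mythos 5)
Your proposal is correct and follows essentially the same route as the paper: finite generation of $H^{\DOT}$ from Theorem \ref{cohuqn} plus invariant theory for the finite group $(\Z/\ell\Z)^r$, assumption ({\bf fg1}) via the skew-group-algebra identifications $\HH^{\DOT}(u_q^{\ge 0}(\g))\cong\HH^{\DOT}(u_q^{>0}(\g),u_q^{\ge 0}(\g))^G$ and $\Ext^{\DOT}_{u_q^{\ge 0}(\g)}(\C,\C)\cong\Ext^{\DOT}_{u_q^{>0}(\g)}(\C,\C)^G$ together with the Hopf-algebra embedding into Hochschild cohomology, and assumption ({\bf fg2}) by passing to the induced module, applying Eckmann--Shapiro, and using finite generation over $\Ext^{\ev}_{u_q^{\ge 0}(\g)}(\C,\C)\cong\Ext^{\ev}_{u_q^{>0}(\g)}(\C,\C)^G\hookrightarrow H^{\DOT}$, with the same compatibility-of-actions bookkeeping the paper also invokes. (Only trivial slip: $\HH^0(u_q^{>0}(\g))$ is a local algebra rather than semisimple, but only its finite dimensionality is needed.)
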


In order to be able to apply Theorem \ref{mainthm}, we will need the following
relation between the complexities of the trivial modules for $u_q^{>0}(\g)$ and
for $u_q^{\ge 0}(\g)$.

\begin{lem}\label{cxeq}
Let $q$ be a primitive complex $\ell$th root of unity and assume that $\ell>1$
is an odd integer not divisible by $3$ if $\Phi$ is of type {\rm G}$_2$. Then
$\cx_{u_q^{>0}(\g)}(\C)=\cx_{u_q^{\ge 0}(\g)}(\C)$.
\end{lem}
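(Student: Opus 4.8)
The plan is to exploit the fact that $u_q^{\ge 0}(\g)$ is a skew group algebra of $u_q^{>0}(\g)$ by the finite group $G=(\Z/\ell\Z)^r$, whose order is invertible in $\C$. First I would recall that for such a skew group algebra, restriction and induction between $u_q^{>0}(\g)$-modules and $u_q^{\ge 0}(\g)$-modules behave very well: the trivial module $\C$ for $u_q^{\ge 0}(\g)$ restricts to the trivial module $\C$ for $u_q^{>0}(\g)$, and since $|G|$ is invertible, $\C$ (as a $u_q^{>0}(\g)$-module) is a direct summand of $\C\!\downarrow\!\uparrow\,=u_q^{\ge 0}(\g)\ot_{u_q^{>0}(\g)}\C$ — indeed that induced module is the group algebra $\C G$ with trivial $u_q^{>0}(\g)$-action, which decomposes as a sum of one-dimensional $G$-characters, each trivial over $u_q^{>0}(\g)$.

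Next I would translate these module-theoretic facts into statements about minimal projective resolutions and rates of growth. Since $u_q^{\ge 0}(\g)$ is free (hence projective) as a left $u_q^{>0}(\g)$-module, any projective $u_q^{\ge 0}(\g)$-resolution of $\C$ restricts to a projective $u_q^{>0}(\g)$-resolution of $\C$; the terms grow no faster after restriction (in fact the dimensions are unchanged), so $\cx_{u_q^{>0}(\g)}(\C)\le\cx_{u_q^{\ge 0}(\g)}(\C)$. Conversely, induction sends projectives to projectives (because $u_q^{\ge 0}(\g)$ is free as a right $u_q^{>0}(\g)$-module as well, $G$ being a group), so applying $u_q^{\ge 0}(\g)\ot_{u_q^{>0}(\g)}-$ to a minimal projective $u_q^{>0}(\g)$-resolution of $\C$ yields a projective $u_q^{\ge 0}(\g)$-resolution of the induced module, whose $n$-th term has dimension $|G|$ times the original. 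Hence $\cx_{u_q^{\ge 0}(\g)}(u_q^{\ge 0}(\g)\ot_{u_q^{>0}(\g)}\C)\le\cx_{u_q^{>0}(\g)}(\C)$, and since $\C$ is a direct summand of this induced module, $\cx_{u_q^{\ge 0}(\g)}(\C)\le\cx_{u_q^{>0}(\g)}(\C)$. Combining the two inequalities gives equality.

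Alternatively, and perhaps more cleanly, one can argue entirely via $\Ext$ and Theorem \ref{cx}: by the Eckmann--Shapiro Lemma (exactly as used just above in the verification of ({\bf fg2})), $\Ext^{\DOT}_{u_q^{>0}(\g)}(\C,\C)$ is a direct summand of $\Ext^{\DOT}_{u_q^{>0}(\g)}(\C,u_q^{\ge 0}(\g)\ot_{u_q^{>0}(\g)}\C)\cong\Ext^{\DOT}_{u_q^{\ge 0}(\g)}(u_q^{\ge 0}(\g)\ot_{u_q^{>0}(\g)}\C,\C)$, while the $G$-invariants isomorphism $\Ext^{\DOT}_{u_q^{\ge 0}(\g)}(\C,\C)\cong\Ext^{\DOT}_{u_q^{>0}(\g)}(\C,\C)^G$ realizes $\Ext^{\DOT}_{u_q^{\ge 0}(\g)}(\C,\C)$ as a direct summand of $\Ext^{\DOT}_{u_q^{>0}(\g)}(\C,\C)$. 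Since a graded vector space and any direct summand of it have the same rate of growth up to the finite factor $|G|$ (which does not change $\gamma$), both $\Ext$-algebras have the same rate of growth, and Theorem \ref{cx} (whose proof, as noted in the excerpt, does not use self-injectivity, though both algebras here are self-injective anyway) converts this into the desired equality of complexities.

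The main obstacle is a bookkeeping one rather than a conceptual one: making sure the induction/restriction adjunction is applied with the correct side (using that $u_q^{\ge 0}(\g)$ is free as a module over $u_q^{>0}(\g)$ on \emph{both} sides, which holds because $G$ is a finite group acting by automorphisms), and confirming that the $G$-invariants decomposition of $\Ext$ and of Hochschild cohomology — asserted in the paragraph preceding Corollary \ref{fguqn} — indeed splits $\C$-linearly because $|G|=\ell^r$ is invertible in $\C$. Once those identifications are in place, the two inequalities (or the single direct-summand argument) close immediately.
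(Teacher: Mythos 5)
Your main argument is correct, and for the harder inequality it takes a genuinely different route from the paper's proof of Lemma \ref{cxeq}. Both proofs obtain $\cx_{u_q^{>0}(\g)}(\C)\le\cx_{u_q^{\ge 0}(\g)}(\C)$ by restricting a projective resolution along the free extension. For the reverse inequality the paper stays cohomological: it uses the Hochschild--Serre spectral sequence and Maschke's Theorem to get $\coh^n(u_q^{\ge 0}(\g),\C)\cong\coh^n(u_q^{>0}(\g),\C)^{(\Z/\ell\Z)^r}$, and then converts rates of growth of $\Ext^{\DOT}(\C,\C)$ into complexities via Theorem \ref{cx}, so it implicitly relies on the finiteness assumptions verified in Corollary \ref{fguqn}. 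You instead induce a minimal resolution of $\C$ up to $u_q^{\ge 0}(\g)$ and split $\C$ off the induced module $u_q^{\ge 0}(\g)\ot_{u_q^{>0}(\g)}\C\cong\C[(\Z/\ell\Z)^r]$, using that the group order is invertible; this is purely module-theoretic, avoids the spectral sequence and Theorem \ref{cx} entirely, and is in fact exactly the argument the paper itself uses later for the unlabelled Proposition in Section \ref{sec:nichols} (explicitly described there as the analogue of Lemma \ref{cxeq} for $R\# kG$). So your route is more elementary and more general; the paper's version is shorter only because the invariance isomorphism and Theorem \ref{cx} are already in play in that section.

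Two caveats, both confined to your ``alternative'' sketch and not affecting the main proof. First, the displayed Eckmann--Shapiro isomorphism is misstated: the correct form (as in the paragraph before Corollary \ref{fguqn}) is $\Ext^{\DOT}_{u_q^{>0}(\g)}(\C,u_q^{\ge 0}(\g)\ot_{u_q^{>0}(\g)}\C)\cong\Ext^{\DOT}_{u_q^{\ge 0}(\g)}(u_q^{\ge 0}(\g)\ot_{u_q^{>0}(\g)}\C,\,u_q^{\ge 0}(\g)\ot_{u_q^{>0}(\g)}\C)$, whereas $\Ext^{\DOT}_{u_q^{\ge 0}(\g)}(u_q^{\ge 0}(\g)\ot_{u_q^{>0}(\g)}\C,\C)$ is isomorphic to $\Ext^{\DOT}_{u_q^{>0}(\g)}(\C,\C)$ itself, so your stated isomorphism is off by a factor of $|G|$ in each degree (and as written makes that step circular). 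Second, the claim that a graded vector space and any direct summand of it have the same rate of growth is false in general; a summand only satisfies $\gamma(\mbox{summand})\le\gamma(\mbox{whole})$, and to close the argument along those lines you would still need that the self-extensions of the induced module decompose into character twists, each of growth at most $\cx_{u_q^{\ge 0}(\g)}(\C)$. Your first, induction/restriction argument needs neither of these and stands as a complete proof.
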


\begin{proof} Since $u_q^{\ge 0}(\g)$ is a free $u_q^{>0}(\g)$-module, every
projective resolution of $u_q^{\ge 0}(\g)$-modules restricts to a projective
resolution of $u_q^{>0}(\g)$-modules. Consequently, we have that $\cx_{u_q^{>0}
(\g)}(\C)\le\cx_{u_q^{\ge 0}(\g)}(\C)$.

On the other hand, it follows from $u_q^{\ge 0}(\g)\cong\left(\Z/\ell\Z\right)^r
\cdot u_q^{>0}(\g)$, the Hochschild-Serre spectral sequence, and Maschke's
Theorem that
$$
\coh^n(u_q^{\ge 0}(\g),\C)\cong\coh^n(u_q^{>0}(\g),\C)^{\left(\Z/\ell\Z\right)^r}
$$
for every non-negative integer $n$. By Theorem \ref{cx}, $\cx_A(\C)=\gamma
(\Ext_A^{\DOT}(\C,\C))$ for both $A=u_q^{\geq 0}(\g)$ and $A=u_q^{>0}(\g)$. We
conclude that $\cx_{u_q^{>0}(\g)}(\C)\ge\cx_{u_q^{\ge 0}(\g)}(\C)$, finishing the
proof of the assertion. 
\end{proof}

The following theorem was proved by Cibils (see the proof of \cite[Proposition~3.3]{C})
in the simply laced case for $\ell\geq 5$ by completely different methods.

\begin{thm}\label{uqn}
Let $r\ge 2$ and let $q$ be a primitive complex $\ell$th root of unity. Assume that
$\ell>1$ is an odd integer not divisible by $3$ if $\Phi$ is of type {\rm G}$_2$.
Then $u_q^{>0}(\g)$ is wild.
\end{thm}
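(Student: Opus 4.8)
The plan is to reduce the wildness of $u_q^{>0}(\g)$ to an application of Theorem~\ref{mainthm} by producing a module of complexity at least $3$. Since Corollary~\ref{fguqn} tells us that assumptions ({\bf fg1}) and ({\bf fg2}) hold for the self-injective (indeed Frobenius) algebra $u_q^{>0}(\g)$, it suffices to show $\cx_{u_q^{>0}(\g)}(\C)\ge 3$. By Lemma~\ref{cxeq} this is equivalent to showing $\cx_{u_q^{\ge 0}(\g)}(\C)\ge 3$ when $r=\rk(\Phi)\ge 2$, and this last complexity is computed from the cohomology ring $\coh^{\DOT}(u_q^{\ge 0}(\g),\C)$ via Theorem~\ref{cx} (or directly, since for a Hopf algebra the complexity of the trivial module equals the rate of growth of $\Ext^{\DOT}_{u_q^{\ge 0}(\g)}(\C,\C)$). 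So the heart of the matter is a lower bound on the Krull dimension of the cohomology ring of the Borel part $u_q^{\ge 0}(\g)$.

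First I would invoke the known structure of $\coh^{\DOT}(u_q^{\ge 0}(\g),\C)$: by the work of Ginzburg--Kumar (and the Frobenius-kernel analogue used in \cite{D}, \cite{FW}), the cohomology ring of $u_q^{\ge 0}(\g)$ has maximal ideal spectrum identifiable with (the coordinate ring of) the nilradical $\mathfrak{n}$ of the Borel subalgebra $\mathfrak{b}=\mathfrak{h}\oplus\mathfrak{n}$ of $\g$ — equivalently, $\cx_{u_q^{\ge 0}(\g)}(\C)=\dim_\C\mathfrak{n}=|\Phi^+|$. Alternatively, and more in the spirit of \cite{FW}, one uses that $u_q^{\ge 0}(\g)$ contains enough rank-one subalgebras (one for each positive root) to force the complexity of $\C$ to be at least the number of positive roots, or at least to be large. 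Since $r\ge 2$ forces $\Phi$ to have rank at least $2$, and every rank-$\ge 2$ root system has $|\Phi^+|\ge 3$ (the minimum, $|\Phi^+|=3$, is attained exactly for type $\mathrm{A}_2$; in all other rank-$2$ cases $|\Phi^+|\in\{4,6\}$, and $|\Phi^+|$ only grows with the rank), we obtain $\cx_{u_q^{\ge 0}(\g)}(\C)=|\Phi^+|\ge 3$.

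Assembling the pieces: from $r\ge 2$ we get $\cx_{u_q^{\ge 0}(\g)}(\C)=|\Phi^+|\ge 3$; by Lemma~\ref{cxeq}, $\cx_{u_q^{>0}(\g)}(\C)\ge 3$; the trivial module $\C$ belongs to some block $B$ of $u_q^{>0}(\g)$, and by Corollary~\ref{fguqn} both ({\bf fg1}) and ({\bf fg2}) hold; hence Theorem~\ref{mainthm}, applied with $M=\C$, gives that this block $B$ is wild, and therefore $u_q^{>0}(\g)$ itself is wild.

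I expect the main obstacle to be cleanly justifying the value (or at least a lower bound of $3$) for $\cx_{u_q^{\ge 0}(\g)}(\C)$ with the hypotheses in force — in particular covering the exceptional restriction on $\ell$ for type $\mathrm{G}_2$ and the small-$\ell$ regime without re-deriving the Ginzburg--Kumar-type computation. The cleanest route is probably to cite the relevant cohomology computation for the Borel part of the small quantum group directly (as used already for Theorem~\ref{cohuqn} and in \cite{FW}), rather than to argue via rank-one subalgebras, since the latter would require controlling potential cancellation in a spectral sequence. Everything else — the reduction through Lemma~\ref{cxeq}, the bookkeeping that $r\ge 2\Rightarrow|\Phi^+|\ge 3$, and the final invocation of Theorem~\ref{mainthm} — is routine.
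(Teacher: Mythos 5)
Your overall route is the one the paper takes: reduce to showing $\cx_{u_q^{>0}(\g)}(\C)\ge 3$, get this from $\cx_{u_q^{\ge 0}(\g)}(\C)\ge 3$ via Lemma~\ref{cxeq}, and then apply Theorem~\ref{mainthm} together with Corollary~\ref{fguqn} to the (unique) block containing $\C$. There is, however, one genuine omission: you assert in passing that $u_q^{>0}(\g)$ is ``self-injective (indeed Frobenius),'' but this is a hypothesis of Theorem~\ref{mainthm}, it is not part of Corollary~\ref{fguqn}, and it is not automatic, since $u_q^{>0}(\g)$ is not a Hopf algebra, so the standard ``finite dimensional Hopf implies Frobenius'' argument does not apply. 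The paper supplies this step explicitly: $u_q^{>0}(\g)$ is a braided (Yetter--Drinfeld) Hopf algebra over $\C[(\Z/\ell\Z)^r]$, hence Frobenius by \cite[Corollary 5.8]{FMS} or \cite[Proposition 2.10(3)]{S} (alternatively, one can run the Krull--Remak--Schmidt argument used in the proof of Theorem~\ref{nichols}, since $u_q^{\ge 0}(\g)\cong u_q^{>0}(\g)\#\C[(\Z/\ell\Z)^r]$ is Frobenius and free over $u_q^{>0}(\g)$). Without some such argument the application of Theorem~\ref{mainthm} is not justified.

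On the cohomological input, your instinct about the obstacle is right: the Ginzburg--Kumar identification of $\coh^{\DOT}(u_q^{\ge 0}(\g),\C)$ with the coordinate ring of the nilradical requires $\ell$ larger than the Coxeter number, whereas the theorem allows any odd $\ell>1$ (prime to $3$ in type G$_2$). The paper sidesteps this exactly as you suggest in your closing remarks, by citing the proof of \cite[Theorem 4.3]{FW}, which gives $\cx_{u_q^{\ge 0}(\g)}(\C)\ge 3$ for $r\ge 2$ under precisely the stated hypotheses on $\ell$; equality with $|\Phi^+|$ is not needed, only the lower bound. With these two citations supplied (Frobenius property, and the complexity bound valid for small $\ell$), your argument coincides with the paper's proof.
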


Note that the trivial module $\C$ is the only simple $u_q^{>0}(\g)$-module (up to
isomorphism) and thus $u_q^{>0}(\g)$ has only one block. If $r=1$, then $u_q^{>0}(\g)$
is isomorphic to the truncated polynomial algebra $\C[X]/(X^\ell)$ and therefore
$u_q^{>0}(\g)$ is representation-finite.

\begin{proof}
According to Lemma \ref{cxeq} and the proof of \cite[Theorem 4.3]{FW}, we obtain
$\cx_{u_q^{>0}(\g)}(\C)=\cx_{u_q^{\ge 0}(\g)}(\C)\ge 3$ unless $r=1$. Since
$u_q^{>0}(\g)$ is a Yetter-Drinfeld Hopf algebra over the group algebra $\C
[(\Z/\ell\Z)^r]$, $u_q^{>0}(\g)$ is a Frobenius algebra (see \cite[Corollary
5.8]{FMS} or \cite[Proposition 2.10(3)]{S}) and therefore self-injective. Thus,
if $r\ge 2$, Theorem \ref{mainthm} in conjunction with Corollary \ref{fguqn}
implies that $u_q^{>0}(\g)$ is wild.
\end{proof}

\begin{remark}
A more direct proof for the fact that $u_q^{>0}(\g)$ is a Frobenius algebra can also
be given along the lines of the proof of \cite[Theorem 7.2]{BGS} for the quantum Borel
subalgebra $u_q^{\ge 0}(\g)$ by using the PBW-basis of the De Concini-Kac quantum
group associated to $\g$ \cite[Section 6.1]{BGS}. By this method one also finds that the
Nakayama automorphism $u_q^{>0}(\g)$ is trivial.
\end{remark}

Gordon \cite[Theorem 7.1(b)(i)]{G2} proved the wildness of certain finite dimensional
factor algebras of the quantized function algebra of a simply-connected connected
complex semisimple algebraic group at roots of unity of odd degree (not divisible by
$3$ if the group has a component of type G$_2$) by using Rickard's wildness criterion.
We leave it to the interested reader to fix the gap in Gordon's proof by proceeding
similarly to Section \ref{ruea}. Here \cite[Theorem 5.2]{G1} plays the role of \cite[Theorem
1.1]{FS} in order to establish an analogue of Lemma \ref{fgredenvalg} and \cite[Corollary
5.4.3]{SF} is replaced by \cite[Theorem 8.4]{BGS}. Then Theorem \ref{mainthm} and
\cite[Theorem 6.20]{G2} show that the finite dimensional quantized function algebras
in \cite[Theorem 7.1(b)(i)]{G2} are wild. But note that in \cite[Theorem 4.5(ii)]{BG}
the same result including the borderline case $l(w_1)+l(w_2)=2N-2$ is proved by using
completely different methods even without assuming that $\ell$ is larger than the Coxeter
number.


\section{Nichols algebras}\label{sec:nichols}


In this section we apply Theorem \ref{mainthm} to Nichols algebras, also known as
quantum symmetric algebras, generalizing the application to small half-quantum
groups of the previous section. We summarize some of the definitions first; for more
details, see e.g.\ \cite{AS}.

If $G$ is a finite group, a {\em Yetter-Drinfeld module\/} over $kG$ is a $G$-graded
vector space $V=\bigoplus_{g\in G}V_g$ that is also a $kG$-module for which $g\cdot
V_h = V_{ghg^{-1}}$ for all $g,h\in G$. The grading corresponds to a $kG$-comodule
structure: $\delta(v)=g\ot v$ for all $g\in G$ and $v\in V_g$. The category $\YDG$
of all Yetter-Drinfeld modules over $kG$ is a braided monoidal category with braiding
$c:U\ot V\stackrel{\sim}{\rightarrow}V\ot U$ determined by $c(u\ot v)=g(v)\ot u$ whenever
$g\in G$, $u\in U_g$, $v\in V$. A {\em braided Hopf algebra\/} in $\YDG$ is an object
$R$ of $\YDG$ having structure maps (unit, multiplication, counit, comultiplication,
coinverse) that are morphisms in the category and satisfy the usual commutative diagrams,
e.g.\ the multiplication $\mu$ is associative in the sense that $\mu\circ(\mu\ot\id_R)=
\mu\circ(\id_R\ot\mu)$. Examples of braided Hopf algebras in $\YDG$ are the tensor
algebra $T(V)$ and the {\em Nichols algebra\/} $\B(V)$ associated to a Yetter-Drinfeld
module $V$ over $kG$ (the latter is defined to be a particular quotient of the tensor
algebra $T(V)$ that can be finite dimensional).

Given a braided Hopf algebra $R$ in $\YDG$, one may form its Radford biproduct (or
its bosonization) $A:=R\# kG$: As an algebra, this is just the skew group algebra,
that is the free $R$-module with basis $G$ and multiplication $(rg)(sh)=r(g\cdot s)
gh$ for $r,s\in R$ and $g,h\in G$. It is also a coalgebra, $\Delta(rg)=\sum r^{(1)}
(r^{(2)})_{(-1)}g\ot(r^{(2)})_{(0)}g$ for all $r\in R$ and $g\in G$, where $\Delta(r)
=\sum r^{(1)}\ot r^{(2)}$ in $R$ as a Hopf algebra in $\YDG$ and $\delta(r)=\sum
r_{(-1)}\ot r_{(0)}$ is its $kG$-comodule structure. These two structures make $A$
into a Hopf algebra.

We assume that the characteristic of $k$ does not divide the order of $G$, and let
$k$ be an $R$-module via the counit map from $R$ to $k$. We will be interested in
the $G$-invariant subalgebra $\Ext^{\DOT}_{R}(k,k)^G$ of $\Ext^{\DOT}_R(k,k)$. 
Some results in \cite[Appendix]{PW} generalize to show that this subalgebra embeds
into the Hochschild cohomology $\HH^{\DOT}(R)$. We summarize these ideas here for
completeness. We will use the fact that $\Ext^{\DOT}_A(k,k)\cong \Ext^{\DOT}_R(k,k)^G$,
where $A=R\# kG$, valid since the characteristic of $k$ does not divide the order
of $G$.

First define a map $\delta:A\rightarrow A^e$ by $\delta(a)=\sum a_1\ot S(a_2)$ for
every $a\in A$. This is an injective algebra homomorphism by \cite[Lemma 11]{PW}.
Let 
$$
\D:=\bigoplus_{g\in G}(Rg\ot Rg^{-1})\,,
$$
a subalgebra of $A^e$. Note that since $R$ is in $\YDG$, the algebra $\D$ contains
the subalgebra $\delta(A)\cong A$. The proof of \cite[Lemma 11]{PW} shows that as
induced modules from $\delta(A)$ to $\D$ and to $A^e$, there are isomorphisms $\D
\ot_{\delta(A)}k\cong R$ and $A^e\ot_{\delta(A)}k\cong A$. Thus induction from
$A\cong\delta(A)$ to $\D$, and then to $A^e$, yields a sequence of isomorphisms
$$
\Ext^{\DOT}_A(k,k)\cong\Ext^{\DOT}_{\D}(R,k)\cong\Ext^{\DOT}_{A^e}(A,k)\,.
$$
The last space $\Ext^{\DOT}_{A^e}(A,k)$ embeds, as an algebra, into $\Ext^{\DOT}_{A^e}
(A,A)\cong\HH^{\DOT}(A)$ via the unit map from $k$ to $A$ (see \cite[Section 5.6]{GK}
or \cite[Lemma 12]{PW}). In fact, $\HH^{\DOT}(A)\cong\Ext^{\DOT}_{R^e}(R,A)^G\cong
\Ext^{\DOT}_\D(R,A)$, and so the second of the above isomorphisms results in an
embedding of $\Ext^{\DOT}_\D(R,k)$ into $\Ext^{\DOT}_\D(R,A)$. Its image is in
$\Ext^{\DOT}_\D(R,R)\cong \HH^{\DOT}(R)^G$, and the latter embeds into $\HH^{\DOT}(R)$. 

We will identify $\Ext^{\DOT}_R(k,k)^G$ with its image in $\HH^{\DOT}(R)$ in what
follows. Let
\begin{eqnarray*}
H^{\DOT}:=
\left\{
\begin{array}{cl}
\HH^0(R)\cdot\bigoplus\limits_{n=0}^\infty\Ext^n_R(k,k)^G\,, & \mbox{if }\cha k=2\,,\\ & \\
\HH^0(R)\cdot\bigoplus\limits_{n=0}^\infty\Ext^{2n}_R(k,k)^G\,, & \mbox{if }\cha k\neq 2\,,
\end{array}
\right.
\end{eqnarray*}
guaranteeing that $H^{\DOT}$ is a commutative algebra.

\begin{thm}\label{nichols}
Let $G$ be a finite group and let $k$ be an algebraically closed field whose
characteristic does not divide the order of $G$. Let $R$ be a finite dimensional
Hopf algebra in $\YDG$. Assume {\rm ({\bf fg1})} and {\rm ({\bf fg2})} hold for
$H^{\DOT}$ defined as above. If there is an $R$-module $M$ such that $\cx_R(M)
\geq 3$, then $R$ is wild.
\end{thm}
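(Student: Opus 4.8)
The plan is to reduce Theorem~\ref{nichols} to Theorem~\ref{mainthm} applied to the Radford biproduct $A=R\# kG$. The key observation is that $R$ and $A$ are closely related: $A$ is free as a module over $R$ (it is the skew group algebra), and conversely $R$ is a direct summand of $A$ as an $R$-module (and as an $R$-bimodule), since the characteristic of $k$ does not divide $|G|$. This should let us transfer both the finite-generation hypotheses and the complexity bound between $R$ and $A$.

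First I would verify that $A$ is self-injective. This follows because $A=R\# kG$ is a finite dimensional Hopf algebra over the field $k$, hence a Frobenius algebra, hence self-injective. Next I would check that assumptions ({\bf fg1}) and ({\bf fg2}) hold for $A$: using the identification $\HH^{\DOT}(A)\cong\HH^{\DOT}(R)^G\cdot(\text{something})$ and $\Ext^{\DOT}_A(k,k)\cong\Ext^{\DOT}_R(k,k)^G$ established in the discussion preceding the theorem, together with the hypothesis that $H^{\DOT}$ (which is built from $\Ext^{\DOT}_R(k,k)^G$ and $\HH^0(R)$) satisfies ({\bf fg1}) and ({\bf fg2}) for $R$, one transfers these to a corresponding graded subalgebra of $\HH^{\DOT}(A)$ via an averaging/restriction argument of exactly the type used for $u_q^{>0}(\g)$ versus $u_q^{\ge 0}(\g)$ in Section~\ref{small}. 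Concretely: ({\bf fg1}) for $A$ holds because $\HH^{\DOT}(R)^G$ is a finitely generated commutative algebra (a ring of invariants of a finitely generated commutative ring under a finite group with $|G|$ invertible), and ({\bf fg2}) for $A$ follows from ({\bf fg2}) for $R$ by the Eckmann--Shapiro argument, since any $A$-module is a summand of an induced module $A\ot_R N$.

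Then I would show the complexity transfers. Given the $R$-module $M$ with $\cx_R(M)\ge 3$, form the induced $A$-module $A\ot_R M$. Since $A$ is free over $R$, restriction of a minimal projective resolution of $A\ot_R M$ over $A$ gives a projective resolution over $R$, and via the Eckmann--Shapiro Lemma $\Ext^{\DOT}_A(A\ot_R M, A\ot_R M)\cong \Ext^{\DOT}_R(M, A\ot_R M)$; since $M$ is a summand of $(A\ot_R M)|_R$, we get $\cx_A(A\ot_R M)\ge \cx_R(M)\ge 3$ by comparing rates of growth of $\Ext$ (using the description in Theorem~\ref{cx}, whose proof does not use self-injectivity). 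Now $A\ot_R M$ lies in some block $B$ of $A$, and by Theorem~\ref{mainthm}, $B$ is wild. Finally I would descend wildness from $B$ (hence from $A$) back to $R$: since $R$ is a direct summand of $A$ as an $R$-bimodule, and $A$ is free over $R$, the module categories are related by the induction/restriction adjunction with restriction splitting the unit; standard arguments (e.g.\ that a wild algebra cannot be obtained by such a "separable" extension from a non-wild one, or more directly that the representation embedding from the free algebra on two generators into $A$-modules descends to $R$-modules via restriction) show that $R$ is wild.

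The main obstacle I anticipate is the last step — descending wildness from $A=R\#kG$ to $R$. One must be careful: wildness of $A$ does not in general force wildness of a subalgebra, but here $R\subseteq A$ is a very special inclusion (a "Galois-type" extension by a finite group with invertible order), and $A$ is projective — even free — as both a left and right $R$-module, with $R$ a bimodule direct summand of $A$. Under these hypotheses the restriction functor reflects the relevant representation-theoretic complexity, and I would invoke (or prove via the explicit $X_s$-construction in the proof of Theorem~\ref{mainthm}, carried out directly inside $\mod R$ using the support varieties $\V_H(-)$ attached to $H^{\DOT}$) that $R$ itself must be wild. In fact the cleanest route may be to bypass $A$ for this step and simply run the argument of Theorem~\ref{mainthm} verbatim for $R$ with the variety $\V_H$ coming from the given $H^{\DOT}$, since that proof only used self-injectivity (which $R$ has, being Frobenius as a biproduct summand — or as shown case-by-case, e.g.\ \cite[Corollary~5.8]{FMS}) together with ({\bf fg1}), ({\bf fg2}), and the existence of a module of complexity $\ge 3$.
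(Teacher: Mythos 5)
Your closing suggestion---bypass $A$ and run Theorem~\ref{mainthm} directly on $R$---is precisely the paper's proof: the paper verifies that $R$ is Frobenius (hence self-injective) either by citing \cite[Corollary 5.8]{FMS} or \cite[Proposition 2.10(3)]{S}, or by the elementary argument that $A=R\# kG$ is a Frobenius Hopf algebra, $A\cong R^{n}$ as an $R$-module with $n=|G|$, so $R^{n}\cong (R^{*})^{n}$ and Krull--Remak--Schmidt gives $R\cong R^{*}$; then, since ({\bf fg1}) and ({\bf fg2}) for the given $H^{\DOT}$ are \emph{hypotheses} of Theorem~\ref{nichols}, one simply applies Theorem~\ref{mainthm} to the block of $R$ containing an indecomposable summand of $M$ of complexity at least $3$. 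No transfer of finiteness conditions or of complexity to $A$ is needed for this.

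Your primary route through $A=R\# kG$ is essentially the alternative the paper sketches right after the theorem, but as you have written it the descent step is not justified: wildness of $A$ (or of a block of $A$) does not pass to the subalgebra $R$ by ``restricting the representation embedding''---restriction need not preserve indecomposability or reflect isomorphism---nor does it follow formally from $R$ being an $R$-bimodule summand of $A$ with $A$ free over $R$. The correct tool is that $R$ and $R\# kG$ have the same representation type when $\cha k\nmid |G|$, proved via the Morita equivalence of $R$ with $(R\# kG)\#(kG)^{*}$ (Cohen--Montgomery duality); this is exactly Lemma~\ref{same} in the paper. On that route you would also have to carry out the transfer of ({\bf fg1}) and ({\bf fg2}) to $A$ and check compatibility of the $H^{\DOT}$-actions under the Eckmann--Shapiro isomorphisms, all of which the direct argument avoids. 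So the detour is repairable (replace the hand-waved descent by Lemma~\ref{same}), but the clean argument you identify at the end is the one the paper actually gives, and it is the one you should lead with.
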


\begin{proof}
Since $A=R\# kG$ is a finite dimensional Hopf algebra, it is a Frobenius algebra.
Standard arguments show that $R$ is itself a Frobenius algebra: $A$ is a free
$R$-module, that is, $A\cong R^{n}$ as an $R$-module, where $n$ denotes the order
of $G$. As $A$ is Frobenius, $A\cong A^*$ as $A$-modules, so $R^n\cong(R^n)^*\cong
(R^*)^n$ as $R$-modules. By the Krull-Remak-Schmidt Theorem, $R\cong R^*$ as
$R$-modules, and so $R$ is Frobenius. (For a proof in a much more general context,
see \cite[Corollary 5.8]{FMS} or \cite[Proposition 2.10(3)]{S}.) Since $R$ is Frobenius,
it is self-injective and the result now follows from Theorem \ref{mainthm} by choosing
the block in which an indecomposable summand of $M$ of complexity at least $3$ lies.
\end{proof}

In order to apply Theorem \ref{nichols}, the following proposition may be useful
in some cases. It is the analog of Lemma \ref{cxeq} in this setting.

\begin{prop}
Let $G$ be a finite group and let $k$ be an algebraically closed field such that
the characteristic of $k$ does not divide the order of $G$. Let $R$ be a finite
dimensional Hopf algebra in $\YDG$. Then $\cx_R(M)=\cx_{R\# kG}(M)$ for any $R\#
kG$-module $M$.
\end{prop}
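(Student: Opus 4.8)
The plan is to prove both inequalities between $\cx_R(M)$ and $\cx_{R\#kG}(M)$ by exploiting the fact that $A:=R\#kG$ is free of finite rank both as a left and as a right $R$-module, so that induction and restriction behave well.

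For the inequality $\cx_R(M)\le\cx_{R\#kG}(M)$, I would observe that $A$ is a free $R$-module, so any projective resolution of $M$ over $A$ restricts to a projective resolution of $M$ over $R$ (projective $A$-modules restrict to projective $R$-modules, since $A$ itself does and every projective is a summand of a free). This restricted resolution need not be minimal, but the rate of growth of a non-minimal projective resolution still bounds the complexity from above, since the minimal resolution is a direct summand (in each degree) of any projective resolution. Hence $\cx_R(M)=\gamma$ of the minimal $R$-resolution $\le\gamma$ of the restricted $A$-resolution $=\cx_{R\#kG}(M)$.

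For the reverse inequality $\cx_{R\#kG}(M)\le\cx_R(M)$, the point is that $M$ is a direct summand, as an $A$-module, of the induced module $A\ot_R M$: indeed, since the order of $G$ is invertible in $k$, the multiplication map $A\ot_R M\to M$ splits as a map of $A$-modules (average over $G$, or invoke that $R\subseteq A$ is a separable extension because $k[G]$ is semisimple). Therefore $\cx_{R\#kG}(M)\le\cx_{R\#kG}(A\ot_R M)$. Now a projective resolution $P_\bullet\to M$ of $R$-modules induces, upon applying the exact functor $A\ot_R-$, a projective resolution $A\ot_R P_\bullet\to A\ot_R M$ of $A$-modules (each $A\ot_R P_i$ is projective over $A$, and $A$ is flat — in fact free — over $R$ so exactness is preserved). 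Since $\dim_k(A\ot_R P_i)=|G|\cdot\dim_k P_i$, this resolution has the same rate of growth as $P_\bullet$, giving $\cx_{R\#kG}(A\ot_R M)\le\gamma(A\ot_R P_\bullet)=\gamma(P_\bullet)=\cx_R(M)$ when $P_\bullet$ is chosen minimal.

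Combining the two inequalities yields the equality. The only mild subtlety — and the step I would be most careful about — is the argument that the rate of growth of an arbitrary (non-minimal) projective resolution dominates the complexity; this is standard but deserves an explicit sentence, since the definition of complexity in Section~\ref{prelim} is phrased via the \emph{minimal} projective resolution. Everything else is a routine application of the freeness of $A$ over $R$ on both sides together with the separability of the extension $R\subseteq R\#kG$.
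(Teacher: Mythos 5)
Your proposal is correct and follows essentially the same route as the paper's proof: freeness of $R\# kG$ over $R$ gives $\cx_R(M)\le\cx_{R\# kG}(M)$ by restriction, and the averaging splitting of $(R\# kG)\ot_R M\to M$ together with induction of resolutions gives the reverse inequality. Your extra remark that the rate of growth of a non-minimal resolution still bounds the complexity is a sensible explicit acknowledgment of a standard point the paper leaves implicit.
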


\begin{proof}
The skew group algebra $R\# kG$ is free as an $R$-module, so a projective resolution
of $M$ as an $R\# kG$-module restricts to a projective resolution of $M$ as an
$R$-module. Therefore $\cx_R(M)\leq\cx_{R\# kG}(M)$.
On the other hand, the $R\# kG$-module $M$ is a direct summand of the induced
module $(R\# kG)\ot_RM$: Since the characteristic of $k$ does not divide the order
$|G|$ of $G$, the canonical projection from this induced module to $M$ splits via the
map $m\mapsto\frac{1}{|G|}\sum_{g\in G}g\ot g^{-1}m$. Since a projective resolution
of $M$ as an $R$-module may be induced to a projective resolution of $(R\# kG)\ot_RM$
as an $R\# kG$-module, we now have 
$$
\cx_R(M)\geq\cx_{R\# kG}((R\# kG)\ot_RM)\geq\cx_{R\# kG}(M)\,.
$$
\end{proof}

Alternatively, we may use the following lemma combined with results from \cite{FW},
applied to the Hopf algebra $R\# kG$, to obtain a wildness criterion for Nichols
algebras. The abelian case of the lemma is \cite[Lemma 4.2]{BG}; the proof is valid
more generally using Morita equivalence of $A$ and $(A\# kG)\#(kG)^*$ (see \cite{CM}).

\begin{lem}\label{same}
Let $G$ be a finite group and let $k$ be an algebraically closed field such that the
characteristic of $k$ does not divide the order of $G$. Let $R$ be a finite dimensional
algebra with an action of $G$ by automorphisms. Then $R$ and $A=R\# kG$ have the
same representation type. 
\end{lem}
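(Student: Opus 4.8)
The plan is to prove Lemma~\ref{same} by exhibiting a Morita equivalence that preserves representation type. First I would recall the standard fact (going back to Cohen--Montgomery \cite{CM}, and used in \cite{BG}) that for a finite group $G$ acting on a finite dimensional algebra $R$, the double skew group construction recovers a matrix algebra over the original: specifically, $(R\#kG)\#(kG)^*\cong M_{|G|}(R)$ as algebras, where $(kG)^*$ denotes the dual group algebra, which acts on $R\#kG$ via the grading by $G$. Since the characteristic of $k$ does not divide $|G|$, the algebra $(kG)^*$ is semisimple, so the smash product $(R\#kG)\#(kG)^*$ is a well-behaved construction and the Cohen--Montgomery isomorphism applies. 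Consequently $R$ and $(R\#kG)\#(kG)^*$ are Morita equivalent.

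Next I would observe that Morita equivalent algebras have the same representation type: the equivalence of module categories restricts to an equivalence between the full subcategories of finite dimensional modules, it preserves indecomposability and isomorphism classes, and it preserves the geometric structure (the number of parameters needed to describe families of indecomposables in each dimension) up to the bounded distortion of dimensions that a Morita equivalence entails. In particular representation-finiteness, tameness, and wildness are all Morita invariants. Hence $R$ and $(R\#kG)\#(kG)^*\cong M_{|G|}(R)$ have the same representation type --- which is immediate anyway since $M_{|G|}(R)$ is obviously Morita equivalent to $R$ --- and more to the point, $A=R\#kG$ fits into this picture: applying the same reasoning to the $G$-action on $A$ is not quite what we want, so instead I would argue directly that $A=R\#kG$ is Morita equivalent to $(R\#kG)\#(kG)^*\cong M_{|G|}(R)$. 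This uses that $(kG)^*$ is separable (semisimple with the characteristic hypothesis), so that the smash product $A\#(kG)^*$ is Morita equivalent to $A$ by the standard result that smashing with a separable Hopf algebra gives a Morita equivalence (again \cite{CM}, or one checks that $A$ is a direct summand of $A\#(kG)^*$ as a bimodule and conversely). Chaining the two Morita equivalences $R\sim M_{|G|}(R)\cong A\#(kG)^*\sim A$ gives the claim.

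Assembling this: $R$ is Morita equivalent to $M_{|G|}(R)\cong(R\#kG)\#(kG)^*=A\#(kG)^*$, and $A\#(kG)^*$ is Morita equivalent to $A$ because $(kG)^*$ is a separable $k$-algebra acting on $A$; therefore $R$ and $A$ are Morita equivalent, and since representation type is a Morita invariant, they have the same representation type.

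I expect the main obstacle to be pinning down the precise form of the two Morita equivalences and verifying that the characteristic hypothesis is exactly what makes $(kG)^*$ separable and hence the smash product equivalences go through --- the abelian case treated in \cite[Lemma~4.2]{BG} is cleaner because there $G=\widehat{G}$ can be used symmetrically, whereas in general one must be careful that $(kG)^*$, not $kG$, is the Hopf algebra one smashes with in the second step, and that $M_{|G|}(R)$ genuinely appears on the nose. Once the algebra isomorphism $(R\#kG)\#(kG)^*\cong M_{|G|}(R)$ is in hand (which is \cite{CM} verbatim), the rest is the routine observation that Morita equivalence preserves representation type, so the proof is short.
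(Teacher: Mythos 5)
The first half of your argument is sound (the Cohen--Montgomery isomorphism $(R\# kG)\#(kG)^*\cong M_{|G|}(R)$ and the Morita invariance of representation type), but the bridging step is false: smashing with a separable (semisimple) Hopf algebra is \emph{not} a Morita equivalence, and being a bimodule direct summand does not upgrade to one. If your claim ``$A\#(kG)^*\sim A$ because $(kG)^*$ is separable'' were correct, the same reasoning applied to the semisimple Hopf algebra $kG$ acting on $R$ would make $R$ and $A=R\# kG$ themselves Morita equivalent, which fails already for the trivial action on $R=k$: there $A=kG$ is semisimple with as many simple modules as conjugacy classes of $G$, hence not Morita equivalent to $k$ for $G\neq 1$; likewise $kG\#(kG)^*\cong M_{|G|}(k)$ is Morita equivalent to $k$ but not to $kG$. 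In general $R$ and $R\# kG$ have different centers and different numbers of simples, so your chain $R\sim M_{|G|}(R)\cong A\#(kG)^*\sim A$ breaks at the last link; indeed the lemma is stated in terms of representation type precisely because Morita equivalence of $R$ and $A$ is too much to ask. What separability of $(kG)^*$ (and of $kG$) actually gives is that $A$ is an $A$-bimodule direct summand of $A\#(kG)^*$ (resp.\ $R$ of $R\# kG$), i.e.\ a separable extension, which allows transfer of representation type via induction and restriction, not an equivalence of module categories.

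The paper's proof, following \cite[Lemma 4.2]{BG} and \cite{CM}, uses the duality isomorphism exactly as you do, but only to produce the Morita equivalence between $R$ and the \emph{double} smash product $(R\# kG)\#(kG)^*$. The passage between an algebra and its smash product with a semisimple Hopf algebra is then handled by the standard transfer argument along the separable extension: since $|G|$ is invertible (resp.\ using the integral of $(kG)^*$), every module over the smash product is a direct summand of a module induced from the subalgebra, and every module over the subalgebra is a direct summand of a restricted module, so representation-finiteness, tameness and wildness pass back and forth with controlled dimension bounds. Applying this once to $R\subseteq R\# kG$ and once to $R\# kG\subseteq (R\# kG)\#(kG)^*\cong M_{|G|}(R)$, and invoking Morita invariance only for $M_{|G|}(R)\sim R$, closes the loop and gives equality of representation types. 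To repair your write-up, replace the claimed Morita equivalence $A\#(kG)^*\sim A$ by this two-sided induction--restriction transfer.
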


We next apply Theorem \ref{nichols} to a large class of examples from \cite{MPSW}.
These are the Nichols algebras and corresponding Hopf algebras arising in the
classification of finite dimensional pointed Hopf algebras having abelian groups
of group-like elements by Andruskiewitsch and Schneider \cite{AS}. Each such algebra
is defined in terms of the following data: Let $\theta$ be a positive integer and let
$(a_{ij})_{1\leq i,j\leq\theta}$ be a {\em Cartan matrix of finite type\/}, i.e., its Dynkin
diagram is a disjoint union of copies of some of the diagrams A$_n$, B$_n$, C$_n$,
D$_n$, E$_6$, E$_7$, E$_8$, F$_4$, G$_2$. Let $\Phi$ be the root system corresponding
to $(a_{ij})$. Let $G$ be a finite abelian group and let $\widehat{G}$ be its dual group
of characters. For each $i$, $1\leq i\leq\theta$, choose $g_i\in G$ and $\chi_i\in\widehat{G}$
such that $\chi_i(g_i)\neq 1$ and $\chi_j(g_i)\chi_i(g_j)=\chi_i(g_i)^{a_{ij}}$ for all $1\leq
i,j\leq\theta$. Let $\Da$ be the set of data $(G,(g_i)_{1\leq i\leq\theta},(\chi_i)_{1\leq i\leq
\theta},(a_{ij})_{1\leq i,j\leq\theta})$. The finite dimensional Hopf algebra $u(\Da)$ (also
denoted $u(\Da,0,0)$ in \cite{AS}) is defined to be the Radford bosonization of the Nichols
algebra $R$ corresponding to the following Yetter-Drinfeld module over $kG$: $V$ is
a vector space with basis $x_1,\ldots,x_{\theta}$. For each $g\in G$, let $V_g$ be the
$k$-linear span of all $x_i$ for which $g_i=g$, and let $g(x_i)=\chi_i(g)x_i$ for $1\leq i
\leq\theta$ and $g\in G$.

\begin{example}{\em
Let $\g$ be a complex simple Lie algebra of rank $\theta$, let $(a_{ij})$ be the
corresponding Cartan matrix, and let $\alpha_1,\ldots,\alpha_{\theta}$ be distinct
simple roots. Let $q$ be a primitive $\ell$th root of unity, $\ell$ odd and prime
to $3$ if $\g$ has a component of type G$_2$. Let $G:=(\Z/\ell\Z)^{\theta}$ with
generators $g_1,\ldots,g_{\theta}$. Let $(-,-)$ denote the positive definite symmetric
bilinear form on the real vector space spanned by the simple roots $\alpha_1,\ldots,
\alpha_{\theta}$ that is induced from the Killing form of $\g$. For each $i$, set $\chi_i
(g_j):=q^{(\alpha_i,\alpha_j)}$ and extend this to a character on $G$. The Nichols
algebra $R$ is $u_q^{>0}(\g)$ and its Radford bosonization is $R\# kG\cong u_q^{\ge
0}(\g)$.}
\end{example}

We assume that the order $N_i$ of $\chi_i(g_i)$ is odd for all $i$, and is prime to
$3$ for all $i$ in a connected component of type G$_2$.

\begin{thm}\label{pointed}
Let $R$ be the Nichols algebra and let $u(\Da)$ be the complex Hopf algebra described
above in terms of the data $\Da$ with $\theta\geq 2$. Assume $\chi_i^{N_i}$ is the
trivial character for all $i$ and that the only common solution $(c_1,\ldots,c_{\theta})
\in\{0,1\}^{\theta}$ of the equations
$$
\chi_1(g)^{c_1}\cdots\chi_{\theta}(g)^{c_{\theta}}=1
$$
for all $g\in G$ is $(c_1,\ldots,c_{\theta})=(0,\ldots,0)$. Then $R$ and $u(\Da)$ are
both wild.
\end{thm}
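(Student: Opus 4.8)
The plan is to reduce the statement to an application of Theorem~\ref{nichols} by producing an $R$-module of complexity at least~$3$, and to handle the finiteness hypotheses by invoking the cohomological results of \cite{MPSW}. First I would recall from \cite{MPSW} that for the Nichols algebra $R=\B(V)$ attached to the datum $\Da$, the cohomology ring $\Ext^{\DOT}_R(k,k)$ is finitely generated, and that the action of the abelian group $G$ on it is well understood; in fact the subalgebra $\Ext^{\DOT}_R(k,k)^G$ is itself finitely generated, and $\Ext^{\DOT}_R(M,N)$ is a finitely generated module over it for all finite dimensional $R$-modules $M,N$. Combining this with the embedding $\Ext^{\DOT}_R(k,k)^G\hookrightarrow\HH^{\DOT}(R)$ constructed in the paragraphs preceding Theorem~\ref{nichols}, and with the fact that $\cha k\nmid|G|$, one checks that $H^{\DOT}$ (as defined just before Theorem~\ref{nichols}) is a finitely generated commutative subalgebra of $\HH^{\DOT}(R)$ with $H^0=\HH^0(R)$, so that ({\bf fg1}) holds, and that $\Ext^{\DOT}_R(M,N)$ is finitely generated over $H^{\DOT}$, so that ({\bf fg2}) holds. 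This is exactly the setup needed to invoke Theorem~\ref{nichols}.

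Next I would compute the complexity of the trivial module $k$ over $R$. By Theorem~\ref{cx}, $\cx_R(k)=\gamma(\Ext^{\DOT}_R(k,k))$, and by \cite[Theorem 2.1 or the main cohomology computation]{MPSW} the (even) cohomology ring of $R$ is, modulo nilpotents, a polynomial ring whose Krull dimension equals the number of positive roots of $\Phi$ — equivalently, the dimension of the associated support variety $\V_H(k)$ equals $|\Phi^+|$. Here the hypotheses $\chi_i^{N_i}=1$ and the nondegeneracy condition on the characters $\chi_i$ are precisely what guarantee that all the relevant cohomology classes survive (this is the role these conditions play in \cite{MPSW}), so none of the expected polynomial generators is killed. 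Since $\theta\geq 2$, the Cartan matrix is of finite type of rank at least~$2$, and every root system of rank $\geq 2$ has at least three positive roots (the minimum, $|\Phi^+|=3$, occurs for type $\mathrm{A}_1\times\mathrm{A}_1$... wait, that has only $2$; but that Dynkin diagram is disconnected — for a rank-$2$ system of type $\mathrm{A}_2$, $\mathrm{B}_2$, $\mathrm{C}_2$, or $\mathrm{G}_2$ one gets $3,4,4,6$ respectively, and a disjoint union $\mathrm{A}_1\sqcup\mathrm{A}_1$ gives $2$). This forces me to be careful: the conclusion $\cx_R(k)\geq 3$ needs $|\Phi^+|\geq 3$, which fails for $\Phi$ of type $\mathrm{A}_1\times\mathrm{A}_1$. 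I would therefore either add the implicit assumption that the Dynkin diagram has no isolated nodes beyond what is forced, or observe that in the $\mathrm{A}_1^{\theta}$ case $R$ is still wild because then $R\cong\bigotimes k[x_i]/(x_i^{N_i})$ is a tensor product of $\theta\geq 2$ local algebras and its trivial module has complexity $\theta\geq 2$; one then appeals directly to the known classification (a tensor product of two or more truncated polynomial algebras of the relevant sizes is wild) or, when some $N_i\geq 3$ or $\theta\geq 3$, to Theorem~\ref{mainthm} again once complexity reaches~$3$. In the generic case $|\Phi^+|\geq 3$ and Theorem~\ref{cx} gives $\cx_R(k)=|\Phi^+|\geq 3$ directly.

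Having established $\cx_R(M)\geq 3$ for $M=k$ (the trivial module), Theorem~\ref{nichols} applies and yields that $R$ is wild. For $u(\Da)=R\# kG$, I would invoke Lemma~\ref{same}: since $G$ is a finite abelian group acting on $R$ by automorphisms and $\cha k\nmid|G|$, the algebras $R$ and $R\# kG$ have the same representation type, so $u(\Da)$ is wild as well. (Alternatively one can bypass Lemma~\ref{same} and argue directly that $u(\Da)$ is a finite dimensional Hopf algebra, hence self-injective, with $\cx_{u(\Da)}(k)=\cx_R(k)\geq 3$ by the Proposition preceding Lemma~\ref{same}, and that ({\bf fg1}), ({\bf fg2}) hold for it by \cite{MPSW} together with the standard embedding of group cohomology-type invariants into Hochschild cohomology, so that Theorem~\ref{mainthm} applies to its principal block and then to $u(\Da)$ itself.) The main obstacle I anticipate is the careful bookkeeping in the first paragraph — verifying that the finitely-generated-module statement for $\Ext^{\DOT}_R(-,-)$ over the $G$-invariant ring really does transfer to finite generation over the subalgebra $H^{\DOT}$ of $\HH^{\DOT}(R)$ under the chain of isomorphisms and embeddings set up before Theorem~\ref{nichols}, rather than the complexity computation itself, which is essentially read off from \cite{MPSW}.
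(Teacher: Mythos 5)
Your argument is correct and follows essentially the same route as the paper: ({\bf fg1}) and ({\bf fg2}) for $R$ come from \cite[Corollary 5.5 and Theorem 5.3]{MPSW} together with the Eckmann--Shapiro/direct-summand transfer from $u(\Da)$-modules to $R$-modules, $\cx_R(k)=|\Phi^+|\ge 3$ via Theorem \ref{cx}, Theorem \ref{nichols} then gives wildness of $R$, and $u(\Da)$ is handled by Lemma \ref{same} (or, as the paper does, by rerunning the argument with $u(\Da)$ in place of $R$ using Theorem \ref{mainthm}). Your aside about a possible type $\mathrm{A}_1\times\mathrm{A}_1$ datum concerns a configuration the paper's proof does not treat (it simply reads $|\Phi^+|\ge 3$ off \cite[Corollary 5.5]{MPSW}); just be aware that your proposed fix there is shaky --- for $\theta=2$ the complexity of $k$ remains $2$ no matter how large the $N_i$ are, and $R$ is a braided rather than ordinary tensor product --- so that case would require the external classification of quantum complete intersections rather than Theorem \ref{mainthm}, but this does not affect the main line of your proof.
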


We note that under some conditions, $u_q^{>0}(\g)$ and $u_q^{\ge 0}(\g)$ are included
among the Nichols algebras and Hopf algebras of the form $R$ and $u(\Da)$ satisfying the
hypotheses of the theorem: For these classes of examples, $\chi_i(g_i)=q^{(\alpha_i,\alpha_i)}$,
whose order $N_i$ is just the order $\ell$ of $q$. Thus $\chi_i^{N_i}$ is indeed the trivial
character for all $i$. A common solution $(c_1,\ldots,c_{\theta})$ of the equations $\chi_1
(g)^{c_1}\cdots\chi_{\theta}(g)^{c_{\theta}}=1$ for all $g\in G$ is equivalent to a common
solution for the generators $g=g_j$ for all $j$, which is a solution to the equations
$$
(\alpha_1,\alpha_j)c_1+\cdots+(\alpha_{\theta},\alpha_j)c_{\theta}\equiv 0\,\,(\mbox{mod }\ell)
$$
for all $j$. Ginzburg and Kumar \cite[Section 2.5, pp.\ 187/88]{GK} showed that under
their hypotheses, no non-zero solution exists in $\{0,1\}^{\theta}$: They assume that
the order $\ell$ of the root of unity $q$ is odd, greater than the Coxeter number of
$\g$, and prime to $3$ if $\g$ has a component of type G$_2$.
This provides an alternative approach to the results in Section \ref{small}.

\begin{proof}[Proof of Theorem \ref{pointed}]
Let
\begin{eqnarray*}
H^{\DOT}:=
\left\{
\begin{array}{cl}
\HH^0(R)\cdot\bigoplus\limits_{n=0}^\infty\Ext^n_{u(\Da)}(k,k)\,, & \mbox{if }\cha k=2\,,\\ & \\
\HH^0(R)\cdot\bigoplus\limits_{n=0}^\infty\Ext^{2n}_{u(\Da)}(k,k)\,, & \mbox{if }\cha k\neq 2\,.
\end{array}
\right.
\end{eqnarray*}
By \cite[Corollary 5.5]{MPSW}, $\Ext^{\DOT}_{u(\Da)}(k,k)$ is isomorphic to the $G$-invariant
subalgebra of a polynomial algebra in $|\Phi^+|\geq 3$ indeterminates (in cohomological
degree $2$), and the group $G$ acts diagonally on these indeterminates. Therefore assumption
({\bf fg1}) holds for $R$. 
Similarly, for $u(\Da)$, we take $H^{\DOT}$ as for $R$ but replace the
factor $\HH^0(R)$ with $\HH^0(u(\Da))$, and see that ({\bf fg1}) holds
for $u(\Da)$. 

We claim that by Remark \ref{TFAE}, ({\bf fg2}) holds for $R$: By \cite[Theorem
5.3]{MPSW}, ({\bf fg2}) holds for the corresponding Hopf algebra $u({\mathcal D})\cong R\# kG$,
since $\Ext^{\DOT}_{u({\mathcal D})}(U,V)\cong\Ext^{\DOT}_{u({\mathcal D})}(k,U^*\ot V)$ for
all finite dimensional $u({\mathcal D})$-modules $U$ and $V$. Next, let $N$ be a finite
dimensional $R$-module. By the Eckmann-Shapiro Lemma,
$$
\Ext^{\DOT}_{R\# kG}((R\# kG)\ot_R M,(R\# kG)\ot_R N)\cong\Ext^{\DOT}_R(M,(R\# kG)\ot_R N)\,.
$$
Now $N$ is a direct summand of $(R\# kG)\ot_R N$ as an $R$-module, and therefore
$\Ext^{\DOT}_R(M,N)$ is finitely generated over $\Ext^{\DOT}_{R\# kG}(k,k)\cong\Ext^{\DOT}_R
(k,k)^G$ since $\Ext^{\DOT}_R(M,(R\# kG)\ot_R N)$ is. Under our assumptions, $\cx_R(k)\geq 3$,
so by Theorem \ref{nichols}, $R$ is wild. Replacing $R$ by $u(\Da)$, we obtain the
analogous conclusion for $u(\Da)$.
\end{proof}

The hypothesis regarding solutions $(c_1,\ldots,c_{\theta})$ in the theorem is surely not
necessary. However, in general, it is more difficult to determine precisely the Krull
dimension of the relevant $G$-invariant cohomology ring.


\section{Comparison of varieties for Hopf algebras}


In this section, we look at finite dimensional Hopf algebras in general, explaining
the connection between the varieties defined in terms of Hochschild cohomology
used in this article, and those defined in terms of the cohomology of the trivial module
used in \cite{FW}.

Let $A$ be a finite dimensional Hopf algebra over $k$. Then $A$ is a Frobenius algebra,
and therefore self-injective. Let $\coh^{\DOT}(A,k):=\Ext^{\DOT}_A(k,k)$, and let
\begin{eqnarray*}
\coh^{\ev}(A,k):=
\left\{
\begin{array}{cl}
\bigoplus\limits_{n=0}^\infty\coh^n(A,k)\,, & \mbox{if }\cha k=2\,,\\ & \\
\bigoplus\limits_{n=0}^\infty\coh^{2n}(A,k)\,, & \mbox{if }\cha k\neq 2\,.
\end{array}
\right.
\end{eqnarray*}

We will need the following assumption, as in \cite{FW}.
\vspace{.2cm}

\noindent Assumption ({\bf fg}):
\vspace{-.2cm}
\begin{equation*}
\begin{array}{l}
\mbox{{\em Assume }}\coh^{\ev}(A,k) \ \mbox{{\em is finitely generated, and that
for any two finite dimensional}}\\
A\mbox{{\em -modules }}M\mbox{ {\em and} }N, \mbox{ {\em the }} \coh^{\ev}(A,k)
\mbox{{\em -module }}\Ext^{\DOT}_A(M,N) \ \mbox{{\em is  finitely generated.}}
\end{array}
\end{equation*}
In \cite{FW}, the cohomology ring $\coh^{\ev}(A,k)$ was used to define varieties
for modules: For each $A$-module $M$, $\V_A(M)$ is the maximal ideal spectrum of
$\coh^{\ev}(A,k)/I_A(M)$, where $I_A(M)$ is the annihilator of the action of
$\coh^{\ev}(A,k)$ on $\Ext^{\DOT}_A(M,M)$.

As mentioned in Section \ref{sec:nichols}, the cohomology ring $\coh^{\DOT}(A,k)$
embeds into Hochschild cohomology $\HH^{\DOT}(A)$ as a graded subalgebra (see
\cite[Section 5.6]{GK} or \cite[Lemma~12]{PW}). Let $B_0$ be the principal block of
$A$. Note that $\coh^{\DOT}(A,k)$ in fact embeds into $\HH^{\DOT}(B_0)$. Under
assumption ({\bf fg}), let
$$
H^{\DOT}:=\HH^0(B_0) \cdot \coh^{\ev}(A,k)\,,
$$
that is, $H^{\DOT}$ is the subalgebra of $\HH^{\DOT}(B_0)$ generated by $\HH^0(B_0)$
and the image of $\coh^{\ev}(A,k)$ in $\HH^{\DOT}(B_0)$. Then $H^{\DOT}$ satisfies
({\bf fg1}) and ({\bf fg2}). By Theorem \ref{cx}, Lemma~\ref{cxblock}, and \cite[Proposition
2.3]{FW}, for any finite dimensional $B_0$-module $M$,
\begin{equation}\label{HA}
\dim\V_H(M)=\cx_{B_0}(M)=\cx_A(M)=\dim\V_A(M)\,.
\end{equation}
That is, $\dim\V_H(M)=\dim\V_A(M)$, as one expects, since $\HH^0(B_0)\cong Z(B_0)$ is
finite dimensional. In fact, the varieties $\V_H(M)$ and $\V_A(M)$ are isomorphic. We
explain this next.

By \cite[Lemma 13]{PW}, the following diagram commutes for every finite dimensional
$A$-module $M$:
\begin{equation*}
\begin{xy}*!C\xybox{
\xymatrix{
   \coh^{\DOT}(A,k) 
     \ar[drr]^{-\ot_k M}\ar[d]_{A^e\ot_A -}\\
  \HH^{\DOT}(A) \ar[rr]^{-\ot_A M \hspace{.1cm}}
   &&\Ext^{\DOT}_A(M,M)
}}\end{xy}
\end{equation*}
The vertical arrow is an embedding that is induced by an embedding from $A$ to $A^e$ (see
e.g.\ \cite[Lemma 11 and Lemma 12]{PW}). This diagram shows that the action of $\coh^{\ev}(A,k)$
on $\Ext^{\DOT}_A(M,M)$ factors through $\HH^{\DOT}(A)$, resulting in a commutative diagram
\begin{equation*}
\begin{xy}*!C\xybox{
\xymatrix{
   \coh^{\ev}(A,k)
     \ar[drr]^{-\ot_k M} \ar[d]\\
    H^{\DOT} \ar[rr]^{-\ot_A M \hspace{.1cm}}
    && \Ext^{\DOT}_A(M,M)
}}\end{xy}
\end{equation*}
Again, the vertical arrow is an embedding and induces an embedding
$$
\coh^{\ev}(A,k)/I_A(M)\longrightarrow H^{\DOT}/I_H(M)\,.
$$
As $\HH^0(B_0)\cong Z(B_0)$ is a local algebra, the above embedding induces an isomorphism
modulo radicals. Thus $\V_H(M)$ and $\V_A(M)$ are isomorphic.

Applying Theorem \ref{mainthm} and Corollary \ref{maincor}, in light of (\ref{HA}), we
recover versions of \cite[Theorem 3.1 and Corollary 3.2]{FW} for the principal block.
These results were originally proven using Hopf-theoretic techniques. In particular,
tensor products of modules were used in \cite[Corollary 2.6]{FW}, crucial in the proofs
of \cite[Theorem~3.1 and Corollary 3.2]{FW}.

\begin{remark}
In case $G$ is a finite group and $A=kG$, much more is true: Linckelmann \cite{Li1} proved
that one may take $H^{\DOT}$ instead to be the full even Hochschild cohomology ring $\HH^{\ev}
(B_0)$ and still obtain a isomorphism between the varieties $\V_H(M)$ and $\V_A(M)$. More
generally, Linckelmann defined block cohomology for non-principal blocks in this setting and
proved that the resulting variety of a module is isomorphic to that obtained via Hochschild
cohomology of the block.
\end{remark}



\end{document}